\def\tank#1{\protected@xdef\@thanks{\@thanks
 \protect\footnotetext[0]{#1}}}
\def\bigfoot{

 \@footnotetext}
\newcommand{\ea}{\end{array}}
\newtheorem{theorem}{Theorem}[section]
\newtheorem{lem}{Lemma}[section]
\newtheorem{prp}[theorem]{Proposition}
\newtheorem{thm}[theorem]{Theorem}
\newtheorem{cor}[theorem]{Corollary}
\newtheorem{dfn}[theorem]{Definition}
\def\beq{\begin{equation}}
\def\nneq{\end{equation}}
\def\bthm{\begin{thm}}
\def\nthm{\end{thm}}
\def\blem{\begin{lem}}
\def\nlem{\end{lem}}
\def\bprf{\begin{proof}}
\def\nprf{\end{proof}}
\def\bprop{\begin{prop}}
\def\nprop{\end{prop}}
\def\brmk{\begin{rem}}
\def\nrmk{\end{rem}}
\def\bexa{\begin{exa}}
\def\nexa{\end{exa}}
\def\bcor{\begin{cor}}
\def\ncor{\end{cor}}
\def\AA{\mathcal A}
\def\FF{\mathcal F}
\def\EE{\mathcal E}
\def\HH{\mathcal H}
\def\e{\varepsilon}
\title{Large deviations for stochastic models of two-dimensional second grade fluids
}
\thanks{Tusheng.Zhang@manchester.ac.uk}\\
\date{}
\newenvironment{proof}{\par\noindent{\bf Proof:}}{\hspace*{\fill}$\blacksquare$\par}
\begin{document}
\maketitle
\noindent \textbf{Abstract:}
In this paper, we established a large deviation principle for stochastic models of incompressible second grade fluids.
The weak convergence method introduced by \cite{Budhiraja-Dupuis} plays an important role.


\vspace{4mm}

\noindent \textbf{AMS Subject Classification}: Primary 60H15 Secondary 35R60, 37L55.

\vspace{3mm}
\noindent \textbf{Key Words:}
Large deviations;
Second grade fluids;
Non-Newtonian fluid;
Stochastic partial differential equations.
\section{Introduction}

In this paper, we are concerned with large deviation principles for stochastic models for the incompressible second grade
fluid which is a particular class of Non-Newtonian fluid. Let $\mathcal{O}$ be a connected, bounded open
subset of $\mathbb{R}^2$ with boundary $\partial \mathcal{O}$ of class $\mathcal{C}^3$. We consider
\begin{eqnarray}\label{01}
&  &d(u^\epsilon-\alpha \triangle u^\epsilon)+\Big(-\nu \triangle u^\epsilon+curl(u^\epsilon-\alpha \triangle u^\epsilon)\times u^\epsilon+\nabla\mathfrak{P}^\e\Big)dt\\\nonumber
& &=
F(u^\epsilon,t)dt+\sqrt{\epsilon}G(u^\epsilon,t)dW,\ \ \ in\ \mathcal{O}\times(0,T],
\end{eqnarray}
under the following condition
\begin{eqnarray}\label{02}
\left\{
 \begin{array}{llll}
 & \hbox{${\rm{div}}\ u^\epsilon=0\ \text{in}\ \mathcal{O}\times(0,T]$;} \\
 & \hbox{$u^\epsilon=0\ \text{in}\ \partial \mathcal{O}\times[0,T]$;} \\
 & \hbox{$u^\epsilon(0)=u_0\ \text{in}\ \mathcal{O}$,}
 \end{array}
\right.
\end{eqnarray}
where $u^\e=(u_1^\e,u_2^\e)$ and $\mathfrak{P}^\e$ represent the random velocity and modified pressure, respectively.
$W$ is an $m$-dimensional standard Brownian motion defined on a complete probability space $(\Omega,\mathcal{F},\{\mathcal{F}_t\}_{t\in[0,T]},P)$.

The interest in the investigation of the second grade fluids arises from the fact that it is an admissible
model of slow flow fluids, which contains a large class Non-Newtonian fluids such as industrial fluids, slurries, polymer melts, etc..
Furthermore, ``the second grade fluid has general and pleasant properties such as boundedness, stability, and exponential decay"(see \cite{Dunn-Fosdick}).
It also has interesting connections with many other fluid models, see \cite{Busuioc, Busuioc-Ratiu, Holm-Marsden-Ratiu, Holm-Marsden-Ratiu01, Iftimie, Shkoller2001, Shkoller1998} and references therein. For example, it can be taken as a generalization of the Navier-Stokes Equation.
Indeed they reduce to Navier-Stokes Equation when $\alpha=0$. Furthermore, it was shown in \cite{Iftimie} that the second grade fluids models are good approximations of the Navier-Stokes Equation.
We refer to \cite{Dunn-Fosdick, Dunn-Rajagopal, Fosdick-Rajagopal, Noll-truesdell} for a comprehensive theory of the second grade fluids.

Recently, the stochastic models of two-dimensional second grade fluids (\ref{01}) have been studied in \cite{RS-12}, \cite{RS-10-01}  and \cite{RS-10}, where the authors obtained the existence and uniqueness of solutions
and investigated the behavior of the solution as $\alpha\rightarrow0$. The martingale solution of
the system (\ref{01}) driven by L\'evy noise is studied in \cite{HRS}.

In the present work we are concerned with large deviation principles of the solutions
of the system (\ref{01}). Large deviations have applications in many areas,
such as in thermodynamics, statistical mechanics, information
theory and risk management, etc., see  \cite{Dembo-Zeitouni} \cite{Touchette}and reference therein. Large deviations for stochastic evolution equations and stochastic
 partial differential equations driven by Gaussian processes have been
investigated in many papers, see e.g.\ \cite{BDM08}, \cite{BDM10}, \cite{BDF12}, \cite{CW}, \cite{CR}, \cite{CM2}, \cite{Liu}, \cite{S}, \cite{Z}.
In this paper, we will apply the weak convergence approach introduced in \cite{Budhiraja-Dupuis}. This approach is mainly based on a variational representation formula for certain functionals of infinite dimensional Brownian Motion.
Technical difficulties arise when implementing weak convergence approach to the system (\ref{01}). One of them is to deal with the nonlinear term $\rm curl(u^\e-\alpha \triangle u^\e)$.

 The organization of this paper is as follows. In Section 2, we introduce some functional spaces and state some lemmas needed later.
  Section 3 is to formulate the hypotheses and to recall the theorem of existence of solutions for system (\ref{01}) obtained in \cite{RS-12}.
  The entire Section 4 is devoted to establishing the large deviation principle for system (\ref{01}).

\section{Preliminaries}

In this section, we will introduce functional spaces and preliminary facts needed later.

Let $1\leq p<\infty$, and $k$ a nonnegative integer. We denote by $L^p(\mathcal{O})$
and $W^{k,p}(\mathcal{O})$ the usual $L^p$ and Sobolev spaces, and write $W^{k,2}(\mathcal{O})=H^k(\mathcal{O})$. Let $W^{k,p}_0(\mathcal{O})$ be
the closure in $W^{k,p}(\mathcal{O})$ of $\mathcal{C}^\infty_c(\mathcal{O})$ the space of infinitely differentiable functions with compact supports in
$\mathcal{O}$. We denote $W^{k,2}_0(\mathcal{O})$ by $H_0^k(\mathcal{O})$. We endow the Hilbert space $H^1_0(\mathcal{O})$
with the scalar product
\begin{eqnarray}\label{H-01}
((u,v))=\int_\mathcal{O}\nabla u\cdot\nabla vdx=\sum_{i=1}^2\int_\mathcal{O}\frac{\partial u}{\partial x_i}\frac{\partial v}{\partial x_i}dx,
\end{eqnarray}
where $\nabla$ is the gradient operator. The norm $\|\cdot\|$ generated by this scalar product is equivalent to the usual norm of $W^{1,2}(\mathcal{O})$
in $H^1_0(\mathcal{O})$.

In what follows, we denote by $\mathbb{X}$ the space of $\mathbb{R}^2$-valued functions such that each component belongs to $X$. We introduce the spaces
\begin{eqnarray}\label{SP-01}
\mathcal{C}=\Big\{u\in[\mathcal{C}^\infty_c(\mathcal{O})]^2\ {\rm such \ that\  div}\ u=0\Big\},\nonumber\\
\mathbb{V}={\rm\ closure\ of}\ \mathcal{C} {\rm\ in}\ \mathbb{H}^1(\mathcal{O}),\\
\mathbb{H}={\rm\ closure\ of}\ \mathcal{C}\ {\rm in}\ \mathbb{L}^2(\mathcal{O}).\nonumber
\end{eqnarray}
We denote by $(\cdot,\cdot)$ and $|\cdot|$ the inner product and the norm induced by the inner product and the norm
in $\mathbb{L}^2(\mathcal{O})$ on $\mathbb{H}$, respectively. The inner product and the norm of $\mathbb{H}^1_0(\mathcal{O})$
are denoted respectively by $((\cdot,\cdot))$ and $\|\cdot\|$. We endow the space $\mathbb{V}$ with the norm generated
by the following scalar product
$$
(u,v)_\mathbb{V}=(u,v)+\alpha ((u,v)),\ \text{for any } v\in\mathbb{V};
$$
which is equivalent to $\|\cdot\|$, more precisely, we have
\begin{eqnarray*}
(\mathcal{P}^2+\alpha)^{-1}\|v\|^2_\mathbb{V}
\leq
\|v\|^2
\leq
\alpha^{-1}\|v\|^2_\mathbb{V},\ \ for\ any\ v\in\mathbb{V},
\end{eqnarray*}
where $\mathcal{P}$ is the constant from Poincar\'e's inequality.

We also introduce the following space
\begin{eqnarray*}
\mathbb{W}=\{u\in\mathbb{V}\ \text{such that }curl (u-\alpha\triangle u)\in L^2(\mathcal{O})\},
\end{eqnarray*}
and endow it with the norm generated by the scalar product
\begin{eqnarray}\label{W}
(u,v)_\mathbb{W}=(u,v)_\mathbb{V}+\Big(curl(u-\alpha\triangle u),curl(v-\alpha\triangle v)\Big).
\end{eqnarray}
The following result states that $(\cdot,\cdot)_\mathbb{W}$ is equivalent to the usual $\mathbb{H}^3(\mathcal{O})$-norm on $\mathbb{W}$, and can be found
in \cite{CE} \cite{CG} and Lemma 2.1 in \cite{RS-12}.

\begin{lem}
Set
$
\widetilde{\mathbb{W}}=\Big\{v\in\mathbb{H}^3(\mathcal{O})\text{ such that }{\rm div} v=0\ and\ v|_{\partial \mathcal{O}}=0\Big\}.
$
Then the following (algebraic and topological) identity holds:
\begin{eqnarray}\label{W = W}
\mathbb{W}=\widetilde{\mathbb{W}}.
\end{eqnarray}

Moreover, there is a positive constant $C$ such that
\begin{eqnarray}\label{W-02}
    \|v\|^2_{\mathbb{H}^3(\mathcal{O})}
\leq
    C\Big(\|v\|^2_\mathbb{V}+|curl(v-\alpha \triangle v)|^2\Big),
\end{eqnarray}
for any $v\in \widetilde{\mathbb{W}}$.
\end{lem}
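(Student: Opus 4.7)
The inclusion $\widetilde{\mathbb{W}}\subset\mathbb{W}$ together with the bound $\|v\|_{\mathbb{V}}^2+|\mathrm{curl}(v-\alpha\triangle v)|^2\le C\|v\|^2_{\mathbb{H}^3(\mathcal{O})}$ is immediate, since $v\in\mathbb{H}^3$ divergence‑free with zero trace yields $v-\alpha\triangle v\in\mathbb{H}^1$ and hence $\mathrm{curl}(v-\alpha\triangle v)\in L^2$. The substance of the lemma is therefore the reverse inclusion and the estimate (\ref{W-02}). My plan is to reconstruct $v\in\mathbb{W}$ as the solution of an auxiliary Stokes‑type elliptic problem whose data sits in $\mathbb{H}^1(\mathcal{O})$, and then invoke standard elliptic regularity for the Stokes system, which is available because $\partial\mathcal{O}\in\mathcal{C}^3$, to gain two derivatives.

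Fix $v\in\mathbb{W}$ and set $g:=\mathrm{curl}(v-\alpha\triangle v)\in L^2(\mathcal{O})$. The first step is to produce a divergence‑free $F\in\mathbb{H}^1(\mathcal{O})$ with $\mathrm{curl}\,F=g$, $F\cdot n|_{\partial\mathcal{O}}=0$, and $\|F\|_{\mathbb{H}^1}\le C|g|$. I would do this through a stream function: solve the Dirichlet problem $-\triangle\psi=g$ in $\mathcal{O}$ with $\psi|_{\partial\mathcal{O}}=0$, obtaining $\psi\in H^2(\mathcal{O})\cap H^1_0(\mathcal{O})$ with $\|\psi\|_{H^2}\le C|g|$ by standard $H^2$ Dirichlet regularity on the $\mathcal{C}^3$ domain, and then set $F=\nabla^{\perp}\psi$.

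Next I would consider the Stokes‑type problem: find $(u,\pi)\in\mathbb{V}\times L^2(\mathcal{O})/\mathbb{R}$ satisfying
\begin{equation*}
u-\alpha\triangle u+\nabla\pi=F\quad\text{in }\mathcal{O},\qquad \mathrm{div}\,u=0,\qquad u|_{\partial\mathcal{O}}=0.
\end{equation*}
Existence and uniqueness of $u$ in $\mathbb{V}$ follow from Lax–Milgram applied to the coercive bilinear form $(\cdot,\cdot)_{\mathbb{V}}$, and elliptic regularity for the stationary Stokes system with $F\in\mathbb{H}^1(\mathcal{O})$ and $\partial\mathcal{O}\in\mathcal{C}^3$ yields $u\in\mathbb{H}^3(\mathcal{O})\cap\mathbb{V}$ with $\|u\|_{\mathbb{H}^3}\le C\|F\|_{\mathbb{H}^1}\le C|g|$. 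To finish I need to identify $u$ with $v$: the distribution $F-(v-\alpha\triangle v)$ is divergence‑free (both summands are, since $\mathrm{div}\,v=0$) and curl‑free (by construction of $F$), hence locally a gradient; absorbing this gradient into the pressure shows that $v$ itself solves the same Stokes‑type problem, so uniqueness forces $u=v$. The estimate (\ref{W-02}) then follows from $\|v\|_{\mathbb{H}^3}\le C|g|$ combined with the trivial control of $\|v\|_{\mathbb{V}}$.

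The step I expect to be trickiest is precisely the identification $u=v$ when $\mathcal{O}$ is not simply connected: a divergence‑free and curl‑free distribution is only locally a gradient, with a finite‑dimensional harmonic‑1‑form obstruction whose size is the first Betti number of $\mathcal{O}$. This is handled by a refined Hodge/Helmholtz decomposition on planar domains with $\mathcal{C}^3$ boundary, or equivalently by adjusting the stream function so that $F$ has the correct circulation on each connected component of $\partial\mathcal{O}$ to annihilate the harmonic part. In the simply connected setting every other step is entirely routine.
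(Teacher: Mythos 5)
The paper does not prove this lemma at all: it is quoted from the literature (Cioranescu--El Hancene, Cioranescu--Girault, and Lemma 2.1 of Razafimandimby--Sango), so there is no in-paper argument to compare against. Your reconstruction is essentially the classical proof behind those citations: the easy inclusion $\widetilde{\mathbb{W}}\subset\mathbb{W}$ plus the reverse bound are immediate, and the substantive inclusion is obtained by writing $g=\mathrm{curl}(v-\alpha\triangle v)\in L^2$, lifting $g$ to a divergence-free $F\in\mathbb{H}^1$ via a stream function, and feeding $F$ into the generalized Stokes system, whose $\mathbb{H}^{3}$ regularity with $\mathcal{C}^3$ boundary is exactly the paper's Lemma \ref{Lem GS} (Solonnikov). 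The identification $u=v$ in the simply connected case is sound: $F-(v-\alpha\triangle v)$ is curl-free in $H^{-1}$, every $\phi\in\mathcal{C}$ is $\nabla^\perp$ of a compactly supported stream function, so the difference annihilates $\mathcal{C}$, and uniqueness for the Stokes-type problem (test with $\phi\in\mathcal{C}$, use density in $\mathbb{V}$) forces $u=v$. Apart from a harmless sign convention in $\mathrm{curl}(\nabla^\perp\psi)$, this part is complete.

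The one place where your argument is a sketch rather than a proof is the one you flag yourself: the lemma is stated for a connected, not necessarily simply connected, $\mathcal{O}$, and there curl-free no longer implies annihilation of $\mathcal{C}$. Your proposed fix (correct $F$ by a harmonic field tangent to the boundary so as to kill the finite-dimensional circulation obstruction) is the right one, but to turn it into a proof you should note how the correction is estimated, because this is precisely where the $\|v\|^2_{\mathbb{V}}$ term in (\ref{W-02}) earns its keep: pairing $F-(v-\alpha\triangle v)$ with a fixed basis $\phi_1,\dots,\phi_k\in\mathcal{C}$ of circulation representatives gives $|\langle F-(v-\alpha\triangle v),\phi_i\rangle|\leq C\left(|g|+\|v\|_{\mathbb{V}}\right)$, since $\langle v-\alpha\triangle v,\phi_i\rangle=(v,\phi_i)+\alpha((v,\phi_i))$, so the harmonic correction $h$ satisfies $\|h\|_{\mathbb{H}^1}\leq C\left(|g|+\|v\|_{\mathbb{V}}\right)$ and the Stokes regularity applied to $F+h$ yields exactly $\|v\|_{\mathbb{H}^3}\leq C\left(\|v\|_{\mathbb{V}}+|g|\right)$. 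In the simply connected case your argument in fact gives the stronger bound $\|v\|_{\mathbb{H}^3}\leq C|g|$, which is consistent with, and explains the form of, the stated estimate. With that quantitative step added, the proposal is a correct proof along the same lines as the references the paper relies on.
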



From now on, we identify the space $\mathbb{V}$ with its dual space $\mathbb{V}^*$ via the Riesz representation, and we have the
Gelfand triple
\begin{eqnarray}\label{Gelfand}
\mathbb{W}\subset \mathbb{V}\subset\mathbb{W}^*.
\end{eqnarray}
 We denote by $\langle f,v\rangle$ the
action of the element $f$ of $\mathbb{W}^*$ on an element $v\in\mathbb{W}$. It is easy to see
$$(v,w)_\mathbb{V}=\langle v,w\rangle,\ \ \ \forall v\in\mathbb{V},\ \ \forall w\in\mathbb{W}.$$

Note that the injection of $\mathbb{W}$ into $\mathbb{V}$ is compact. Thus,
there exists a sequence $\{e_i:i=1,2,3,\cdots\}$ of elements of $\mathbb{W}$ which forms an orthonormal basis in $\mathbb{W}$,
and an orthogonal basis in $\mathbb{V}$.
The elements of this sequence are the solutions of the eigenvalue problem
\begin{eqnarray}\label{Basis}
(v,e_i)_{\mathbb{W}}=\lambda_i(v,e_i)_{\mathbb{V}},\ \text{for any }v\in\mathbb{W}.
\end{eqnarray}
Here $\{\lambda_i:i=1,2,3,\cdots\}$ is an increasing sequence of positive eigenvalues. We have the following important result from
\cite{CG} about the regularity of the functions $e_i,\ i=1,2,3,\cdots.$

\begin{lem}\label{lem Basis}
Let $\mathcal{O}$ be a bounded, simply-connected open subset of $\mathbb{R}^2$ with a boundary of class $\mathcal{C}^3$, then
the eigenfunctions of (\ref{Basis}) belong to $\mathbb{H}^4(\mathcal{O})$.
\end{lem}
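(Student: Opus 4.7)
I would prove this by converting the variational eigenvalue problem into a classical elliptic boundary value problem for $e_i$ and then bootstrapping regularity by means of Stokes-type elliptic estimates on the $\mathcal{C}^3$ domain $\mathcal{O}$.

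First, I would extract the strong form of the equation. Set $\phi_i := e_i - \alpha \triangle e_i$, which lies in $\mathbb{H}^1(\mathcal{O})$ because Lemma 2.1 already gives $e_i \in \mathbb{H}^3(\mathcal{O})$. Taking a divergence-free test function $v \in \mathcal{C}$ (so boundary terms vanish), I would integrate by parts in the bilinear form $(v,e_i)_{\mathbb{W}} = (v,e_i)_{\mathbb{V}} + (\text{curl}(v-\alpha\triangle v),\text{curl}\,\phi_i)$. Using $(v,e_i)_{\mathbb{V}} = (v,\phi_i)$ for $v$ vanishing on $\partial\mathcal{O}$, together with the planar identity $\text{curl}\,\text{curl}\,u = -\triangle u$ on divergence-free vector fields, the eigenvalue equation $(v,e_i)_{\mathbb{W}} = \lambda_i(v,e_i)_{\mathbb{V}}$ reduces to the distributional identity
\begin{equation*}
(1-\lambda_i)\phi_i - \triangle\phi_i + \alpha\triangle^2\phi_i = \nabla p_i \quad\text{in }\mathcal{O},
\end{equation*}
for some pressure $p_i \in \mathcal{D}'(\mathcal{O})$, where $\phi_i$ is divergence-free (since $e_i$ is). Combined with $e_i = 0$ on $\partial\mathcal{O}$ and $\text{div}\,e_i = 0$, this is the interior part of the elliptic system for $e_i$.

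Next, I would recover the boundary conditions. Testing the eigenvalue equation against general $v \in \mathbb{W}$ (no longer compactly supported) and comparing with the interior equation after integration by parts produces a natural boundary condition on $\text{curl}\,\phi_i$ (coming from the non-vanishing tangential traces of $v - \alpha\triangle v$). Combined with $e_i|_{\partial\mathcal{O}} = 0$, this is the complete set of boundary conditions.

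Finally, I would bootstrap via the Stokes operator $I - \alpha\triangle$. The key observation is that $\phi_i = (I-\alpha\triangle) e_i$, so from Stokes regularity on a $\mathcal{C}^3$ domain, the implication $\phi_i \in \mathbb{H}^k \Rightarrow e_i \in \mathbb{H}^{k+2}$ holds for $k \leq 2$. Thus it suffices to upgrade $\phi_i$ from $\mathbb{H}^1$ to $\mathbb{H}^2$. Applying a biharmonic/Stokes-type elliptic estimate to the fourth-order equation above (viewing it as a Stokes problem of order four in $\phi_i$ with source $\triangle\phi_i - (1-\lambda_i)\phi_i \in \mathbb{H}^{-1}$), and using the derived boundary conditions, gives the needed gain of one derivative. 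Then $e_i \in \mathbb{H}^4(\mathcal{O})$ follows.

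The main obstacle is the identification of the boundary condition dual to the $\text{curl}(u-\alpha\triangle u)$ piece of the $\mathbb{W}$-inner product: unlike the Dirichlet condition $e_i = 0$, this arises as a natural (Neumann-type) condition on $\text{curl}\,\phi_i$ along $\partial\mathcal{O}$, and verifying that the resulting fourth-order system is covered by the available elliptic regularity theorem on a merely $\mathcal{C}^3$ domain—without forcing $\mathcal{C}^4$ regularity of the boundary—is the delicate point.
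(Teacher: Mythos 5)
You should note at the outset that the paper itself contains no proof of this lemma: it is quoted verbatim from \cite{CG}, so your sketch has to be measured against the argument that citation rests on and against the tools the paper does import (Lemma 2.1, Lemma \ref{Lem GS}). Your interior computation is correct: testing (\ref{Basis}) with compactly supported divergence-free $v$, using $(v,e_i)_{\mathbb{V}}=(v,\phi_i)$ and $\mathrm{curl}\,\mathrm{curl}=-\triangle$ on divergence-free fields, does give $(1-\lambda_i)\phi_i-\triangle\phi_i+\alpha\triangle^{2}\phi_i=\nabla p_i$ with $\phi_i=e_i-\alpha\triangle e_i$. The gap is the boundary step, and it is not a deferrable technicality. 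First, $\mathrm{curl}\,\phi_i$ is a priori only in $L^2(\mathcal{O})$, so it has no trace, and the integration by parts by which you propose to ``read off'' a natural condition from general $v\in\mathbb{W}$ is not justified; the natural condition (which turns out to be $\mathrm{curl}(e_i-\alpha\triangle e_i)=0$ on $\partial\mathcal{O}$) must be produced by an argument, not formally. Second, and more seriously, the ``fourth-order Stokes problem for $\phi_i$'' to which you want to apply elliptic estimates is not a closed boundary value problem: $\phi_i=(I-\alpha\triangle)e_i$ inherits no Dirichlet data from $e_i|_{\partial\mathcal{O}}=0$, and $\mathrm{div}\,\phi_i=0$ together with a single scalar natural condition falls short of the boundary conditions a fourth-order system needs; the missing information ($\phi_i$ lies in the range of $I-\alpha\triangle$ over fields vanishing on $\partial\mathcal{O}$) is nonlocal and cannot be encoded as boundary conditions on $\phi_i$ alone. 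The honest alternative is the sixth-order system for $e_i$ itself, and then whether Agmon--Douglis--Nirenberg regularity applies on a merely $\mathcal{C}^3$ boundary is exactly the point you flag and leave open, so the proof does not close. (Minor: recovering $e_i$ from $\phi_i$ is a componentwise Dirichlet problem for $I-\alpha\triangle$, not a Stokes problem, and the step $\mathbb{H}^2\Rightarrow\mathbb{H}^4$ at $\mathcal{C}^3$ regularity itself needs the Solonnikov-type Lemma \ref{Lem GS} rather than the crude $\mathcal{C}^{k+2}$ statements.)

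The route behind the citation avoids high-order elliptic theory altogether, by duality. Every scalar $\chi\in L^2(\mathcal{O})$ equals $\mathrm{curl}(v-\alpha\triangle v)$ for some $v\in\mathbb{W}$: solve $-\triangle\eta=\chi$ with $\eta\in H^1_0(\mathcal{O})$, then the generalized Stokes problem (\ref{General Stokes}) with datum $\mathrm{curl}\,\eta\in\mathbb{H}^1$, so that $v-\alpha\triangle v+\nabla q=\mathrm{curl}\,\eta$ and hence $\mathrm{curl}(v-\alpha\triangle v)=-\triangle\eta=\chi$. Inserting this $v$ into (\ref{Basis}) and integrating by parts on the right-hand side gives $\int_{\mathcal{O}}\chi\, z_i\,dx=(\lambda_i-1)\int_{\mathcal{O}}\eta\,\mathrm{curl}\,e_i\,dx$ for every $\chi$, where $z_i:=\mathrm{curl}(e_i-\alpha\triangle e_i)$; by self-adjointness of the Dirichlet Laplacian this means $z_i=(\lambda_i-1)w_i$, where $w_i\in H^1_0(\mathcal{O})$ solves $-\triangle w_i=\mathrm{curl}\,e_i\in H^1(\mathcal{O})$, so $z_i\in H^3(\mathcal{O})\cap H^1_0(\mathcal{O})$ --- this is where the hidden boundary condition $z_i|_{\partial\mathcal{O}}=0$ emerges, with no trace issue. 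Now take $\zeta\in H^1_0(\mathcal{O})$ with $-\triangle\zeta=z_i$, so $\zeta\in H^3$ and $\mathrm{curl}\,\zeta\in\mathbb{H}^2$; since $\phi_i-\mathrm{curl}\,\zeta$ is divergence- and curl-free and $\mathcal{O}$ is simply connected, $e_i-\alpha\triangle e_i+\nabla\pi=\mathrm{curl}\,\zeta$ for some $\pi$, and Lemma \ref{Lem GS} with an $\mathbb{H}^2$ right-hand side yields $e_i\in\mathbb{H}^4(\mathcal{O})$. Only scalar Dirichlet problems and the second-order generalized Stokes system occur, which is why $\mathcal{C}^3$ suffices; this is the argument your sketch would need in place of its unresolved elliptic step.
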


 Consider the following ``generalized Stokes equations":

\begin{eqnarray}\label{General Stokes}
v-\alpha \triangle v+\nabla q=f\ {\rm in}\ \mathcal{O},\nonumber\\
{\rm div}\ v=0\ {\rm in}\ \mathcal{O},\\
v=0\ {\rm on}\ \partial \mathcal{O}.\nonumber
\end{eqnarray}

The following result can be derived from \cite{SO1}, \cite{SO2} and also can be found in \cite{RS-10} and \cite{RS-12}.
\begin{lem}\label{Lem GS}
Let $\mathcal{O}$ be a connected, bounded open subset of $\mathbb{R}^2$ with boundary $\partial \mathcal{O}$ of class $\mathcal{C}^l$
and let $f$ be a function in $\mathbb{H}^l$, $l\geq 1$. Then the system (\ref{General Stokes}) admits a solution $v\in \mathbb{H}^{l+2}\cap\mathbb{V}$.
Moreover if $f$ is an element of $\mathbb{H}$, then $v$ is unique and the following relations hold
\begin{eqnarray}\label{Eq GS-01}
(v,g)_\mathbb{V}=(f,g),\ \forall g\in \mathbb{V},
\end{eqnarray}
\begin{eqnarray}\label{Eq GS-02}
\|v\|_{\mathbb{H}^{l+2}}\leq C\|f\|_\mathbb{H}.
\end{eqnarray}
\end{lem}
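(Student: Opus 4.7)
The plan is to split the proof into three stages: (i) existence and uniqueness of a weak solution $v\in\mathbb{V}$ together with the variational identity (\ref{Eq GS-01}), (ii) recovery of the pressure $q$ as a distribution, and (iii) the interior/boundary regularity estimate (\ref{Eq GS-02}). Throughout, I would treat the case $f\in\mathbb{H}$ as the base case and then bootstrap.

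First, I would establish the variational identity. Consider the bilinear form $a(u,g):=(u,g)_{\mathbb{V}}=(u,g)+\alpha((u,g))$ on $\mathbb{V}\times\mathbb{V}$. By the definition of the inner product on $\mathbb{V}$, $a$ is continuous and coercive on $\mathbb{V}$, and the mapping $g\mapsto (f,g)$ is a continuous linear functional on $\mathbb{V}$ whenever $f\in\mathbb{H}$ (since $\mathbb{V}\hookrightarrow\mathbb{H}$). The Lax--Milgram theorem then yields a unique $v\in\mathbb{V}$ satisfying $(v,g)_{\mathbb{V}}=(f,g)$ for every $g\in\mathbb{V}$, which is exactly (\ref{Eq GS-01}). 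Testing against $g=v$ and using coercivity gives $\|v\|_{\mathbb{V}}\le C|f|$, which is a preliminary $\mathbb{H}^1$-estimate.

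Next, I would recover the pressure. The identity $(v,g)_{\mathbb{V}}-(f,g)=0$ for all $g\in\mathbb{V}$ can be rewritten, after integrating the Laplacian term by parts formally, as $\langle v-\alpha\triangle v-f,g\rangle=0$ for every divergence-free, compactly supported test function. By de Rham's theorem on the bounded Lipschitz domain $\mathcal{O}$, the distribution $v-\alpha\triangle v-f$ is then a gradient, yielding a scalar distribution $q$ (unique up to an additive constant) with $v-\alpha\triangle v+\nabla q=f$ in $\mathcal{D}'(\mathcal{O})$. Combined with the boundary trace $v|_{\partial\mathcal{O}}=0$ (encoded by $v\in\mathbb{V}$) and ${\rm div}\,v=0$, we recover the full system (\ref{General Stokes}).

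The main obstacle is the regularity statement (\ref{Eq GS-02}), which is essentially the Solonnikov estimate for the stationary Stokes-type system $v-\alpha\triangle v+\nabla q=f$. The strategy is a standard elliptic bootstrap on the Stokes operator: using the $\mathcal{C}^l$ smoothness of $\partial\mathcal{O}$ (with $l\ge 1$), difference-quotient techniques in the tangential directions give tangential regularity, and the equation itself together with the divergence-free constraint recovers the normal derivatives; iterating this argument $l$ times raises the regularity from $\mathbb{H}^{2}$ up to $\mathbb{H}^{l+2}$. Rather than reproducing these arguments, I would invoke Solonnikov's results \cite{SO1,SO2} directly, noting that the operator $u\mapsto u-\alpha\triangle u$ is elliptic in the Agmon--Douglis--Nirenberg sense with the same principal symbol (up to the constant $\alpha$) as the Stokes operator, so the same regularity theory applies and yields the estimate $\|v\|_{\mathbb{H}^{l+2}}\le C\|f\|_{\mathbb{H}^{l}}$, which specializes to (\ref{Eq GS-02}). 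Uniqueness finally follows from the coercivity of $a$: any two solutions $v_1,v_2$ differ by an element of $\mathbb{V}$ satisfying $(v_1-v_2,g)_{\mathbb{V}}=0$ for all $g\in\mathbb{V}$, forcing $v_1=v_2$.
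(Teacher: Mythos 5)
Your proposal is correct and follows essentially the same route as the paper, which offers no proof of this lemma but simply cites Solonnikov's theory of Agmon--Douglis--Nirenberg elliptic systems \cite{SO1,SO2} and \cite{RS-10,RS-12}: the Lax--Milgram (equivalently, Riesz representation for the $\mathbb{V}$-inner product) step giving (\ref{Eq GS-01}) and uniqueness, the de Rham argument recovering the pressure, and the appeal to Solonnikov's estimates for the higher regularity are exactly the standard ingredients behind that citation. One caveat: what your argument yields is $\|v\|_{\mathbb{H}^{l+2}}\le C\|f\|_{\mathbb{H}^{l}}$, which does not ``specialize'' to (\ref{Eq GS-02}) as literally written, and indeed for $l\ge 1$ a uniform bound $\|v\|_{\mathbb{H}^{l+2}}\le C\|f\|_{\mathbb{H}}$ cannot hold (since $f=v+\alpha Av$ for $f\in\mathbb{H}$, it would force the false inclusion $\mathbb{H}\subset\mathbb{H}^{l}(\mathcal{O})$ by density), so the $\mathbb{H}^{l}$-version you prove is the correct reading of the statement, inherited from \cite{RS-12}, and is the form the paper actually uses later (e.g.\ $\|\widetilde G(u^M,t)\|_{\mathbb{W}^{\otimes m}}\le C_0\|G(u^M,t)\|_{\mathbb{V}^{\otimes m}}$).
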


Define the Stokes operator by
\begin{eqnarray}\label{Eq Stoke}
Au=-\mathbb{P}\triangle u,\ \forall u\in D(A)=\mathbb{H}^2(\mathcal{O})\cap\mathbb{V},
\end{eqnarray}
here we denote by $\mathbb{P}:\mathbb{L}^2(\mathcal{O})\rightarrow\mathbb{H}$ the usual Helmholtz-Leray projector.
It follows from Lemma \ref{Lem GS} that the operator $(I+\alpha A)^{-1}$ defines an isomorphism form $\mathbb{H}^l(\mathcal{O})\cap\mathbb{H}$
into $\mathbb{H}^{l+2}(\mathcal{O})\cap\mathbb{V}$ provided that $\mathcal{O}$ is of class $\mathcal{C}^l$, $l\geq1$. Moreover, the following
properties hold
\begin{eqnarray*}
& & ((I+\alpha A)^{-1}f,v)_\mathbb{V}=(f,v),\\
& & \|(I+\alpha A)^{-1}f\|_\mathbb{V}\leq C|f|,
\end{eqnarray*}
for any $f\in \mathbb{H}^l(\mathcal{O})\cap\mathbb{V}$ and any $v\in\mathbb{V}$. From these facts, $\widehat{A}=(I+\alpha A)^{-1}A$ defines a
continuous linear operator from $\mathbb{H}^l(\mathcal{O})\cap\mathbb{V}$ onto itself for $l\geq2$, and satisfies
$$
(\widehat{A}u,v)_\mathbb{V}=(Au,v)=((u,v)),
$$
for any $u\in\mathbb{W}$ and $v\in\mathbb{V}$. Hence
$$
(\widehat{A}u,u)_\mathbb{V}=\|u\|,
$$
for any $u\in\mathbb{W}$.

Let
$$
b(u,v,w)=\sum_{i,j=1}^2\int_{\mathcal{O}}u_i\frac{\partial v_j}{\partial x_i}w_jdx,
$$
for any $u,v,w\in\mathcal{C}$. Then the following identity holds(see for instance \cite{Bernard} \cite{CO}):
\begin{eqnarray}\label{curl}
((curl\Phi)\times v,w)=b(v,\Phi,w)-b(w,\Phi,v),
\end{eqnarray}
for any smooth function $\Phi,\ v$ and $w$. Now we recall the following two lemmas which can be found in \cite{RS-12}(Lemma 2.3 and Lemma 2.4),
 and also in  \cite{Bernard} \cite{CO}.

\begin{lem}\label{Lem B}
For any $u,v,w\in\mathbb{W}$, we have
\begin{eqnarray}\label{Ineq B 01}
    |(curl(u-\alpha\Delta u)\times v,w)|
\leq
    C\|u\|_{\mathbb{H}^3}\|v\|_\mathbb{V}\|w\|_{\mathbb{W}},
\end{eqnarray}
and
\begin{eqnarray}\label{Ineq B 02}
    |(curl(u-\alpha\Delta u)\times u,w)|
\leq
    C\|u\|^2_\mathbb{V}\|w\|_{\mathbb{W}}.
\end{eqnarray}
\end{lem}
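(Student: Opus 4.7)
The plan is to apply the identity (2.10), $((curl\,\Phi)\times v, w) = b(v, \Phi, w) - b(w, \Phi, v)$, with $\Phi = u - \alpha \Delta u$, so that both bounds reduce to estimates on the trilinear form $b$.

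For (2.11), I would estimate $b(v, u - \alpha \Delta u, w)$ and $b(w, u - \alpha \Delta u, v)$ directly by Hölder's inequality in the form $|b(a, \Phi, c)| \leq \|a\|_{\mathbb{L}^4}\|\nabla \Phi\|_{\mathbb{L}^2}\|c\|_{\mathbb{L}^4}$. The 2D Sobolev embedding $\mathbb{H}^1 \hookrightarrow \mathbb{L}^4$ gives $\|v\|_{\mathbb{L}^4} \leq C\|v\|_{\mathbb{V}}$ and $\|w\|_{\mathbb{L}^4} \leq C\|w\|_{\mathbb{V}} \leq C\|w\|_{\mathbb{W}}$, while $\|\nabla(u - \alpha\Delta u)\|_{\mathbb{L}^2} \leq C\|u\|_{\mathbb{H}^3}$ is immediate. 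This yields (2.11).

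For (2.12), set $v = u$ and expand, giving
\begin{eqnarray*}
(curl(u - \alpha \Delta u) \times u, w) = b(u, u, w) - b(w, u, u) - \alpha\bigl[b(u, \Delta u, w) - b(w, \Delta u, u)\bigr].
\end{eqnarray*}
The term $b(w, u, u) = \tfrac12 \int w \cdot \nabla|u|^2\,dx = 0$ by $\mathrm{div}\,w = 0$ and $w|_{\partial \mathcal{O}} = 0$, and $|b(u, u, w)| \leq \|u\|_{\mathbb{L}^4}^2\|\nabla w\|_{\mathbb{L}^2} \leq C\|u\|_{\mathbb{V}}^2\|w\|_{\mathbb{W}}$. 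For the two Laplacian terms, the plan is to integrate by parts twice: first move the $\partial_i$ in front of $\Delta u_j$ onto the remaining factor (the boundary terms vanish because $u, w \in \mathbb{V}$, and the divergence contribution vanishes because $\mathrm{div}\,u = \mathrm{div}\,w = 0$); then move one of the $\partial_k$'s of $\Delta = \partial_k\partial_k$ off $u$. In the $b(w, \Delta u, u)$ branch, the symmetric piece $\int w_i \partial_k u_j\,\partial_i \partial_k u_j\,dx = \tfrac12 \int w \cdot \nabla|\nabla u|^2\,dx$ vanishes once more by div-freeness. The residual terms then carry at most one derivative of $u$ in $L^2$ and can be bounded by Hölder using $\|\nabla w\|_{\mathbb{L}^\infty}$ and $\|\nabla^2 w\|_{\mathbb{L}^4}$, both controlled by $C\|w\|_{\mathbb{H}^3} \leq C\|w\|_{\mathbb{W}}$ via the 2D embeddings $\mathbb{H}^2 \hookrightarrow \mathbb{L}^\infty$ and $\mathbb{H}^1 \hookrightarrow \mathbb{L}^4$ together with (2.9), alongside $\|u\|_{\mathbb{L}^4}, \|\nabla u\|_{\mathbb{L}^2} \leq C\|u\|_{\mathbb{V}}$.

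The main obstacle is the second inequality: a factor of $\Delta u$ naively demands two derivatives of $u$ in $L^2$, which exceeds the $\mathbb{V}$-norm. The mechanism that makes (2.12) work is precisely this double integration by parts, which redistributes derivatives as ``one on $u$, two on $w$'' instead of ``two on $u$, one on $w$'', together with the div-free cancellation $\int w \cdot \nabla|\nabla u|^2\,dx = 0$; the additional regularity demanded of $w$ is exactly what the embedding $\mathbb{W} \hookrightarrow \mathbb{H}^3$ supplies.
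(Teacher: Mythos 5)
Your proposal is correct, and it supplies what the paper itself omits: the paper does not prove Lemma \ref{Lem B} but simply recalls it from \cite{RS-12} (Lemmas 2.3 and 2.4 there) and from \cite{Bernard}, \cite{CO}. Your argument is essentially the standard proof behind those citations: apply the identity (\ref{curl}) with $\Phi=u-\alpha\Delta u$, bound (\ref{Ineq B 01}) by H\"older with the 2D embedding $\mathbb{H}^1\hookrightarrow\mathbb{L}^4$, and for (\ref{Ineq B 02}) integrate by parts twice so that the derivatives land as ``one on $u$, two on $w$'', using ${\rm div}\,u={\rm div}\,w=0$ and the zero traces to kill the boundary terms and the symmetric piece $\tfrac12\int w\cdot\nabla|\nabla u|^2\,dx$, and finally $\|w\|_{\mathbb{H}^3}\le C\|w\|_{\mathbb{W}}$ from (\ref{W-02}). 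Two cosmetic remarks: the bound $|b(u,u,w)|\le\|u\|_{\mathbb{L}^4}^2\|\nabla w\|_{\mathbb{L}^2}$ as written uses the antisymmetry $b(u,u,w)=-b(u,w,u)$ (or simply estimate $|b(u,u,w)|\le\|u\|_{\mathbb{L}^4}\|\nabla u\|_{\mathbb{L}^2}\|w\|_{\mathbb{L}^4}$ directly), and since (\ref{curl}) is stated for smooth fields one should add the routine density/continuity remark to apply it for $u\in\mathbb{W}$, $v,w\in\mathbb{W}$; neither point is a genuine gap.
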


Define the bilinear operator $\widehat{B}(\cdot,\cdot):\ \mathbb{W}\times\mathbb{V}\rightarrow\mathbb{W}^*$ as
\begin{eqnarray}\label{Lem B-01}
\widehat{B}(u,v)=(I+\alpha A)^{-1}\Big( curl(u-\alpha \Delta u)\times v\Big).
\end{eqnarray}
\begin{lem}\label{Lem-B-01}
For any $u\in\mathbb{W}$ and $v\in\mathbb{V}$ there holds
\begin{eqnarray}\label{Eq B-01}
    \|\widehat{B}(u,v)\|_{\mathbb{W}^*}
\leq
    C\|u\|_\mathbb{W}\|v\|_\mathbb{V},
\end{eqnarray}
and
\begin{eqnarray}\label{Eq B-02}
 \|\widehat{B}(u,u)\|_{\mathbb{W}^*}
\leq
    C_B\|u\|^2_\mathbb{V}.
\end{eqnarray}
In addition
\begin{eqnarray}\label{Eq B-03}
 \langle\widehat{B}(u,v),v\rangle=0,
\end{eqnarray}
which implies
\begin{eqnarray}\label{Eq B-04}
 \langle\widehat{B}(u,v),w\rangle=-\langle\widehat{B}(u,w),v\rangle,
\end{eqnarray}
for any $u,\ v,\ w\in\mathbb{W}$.

\end{lem}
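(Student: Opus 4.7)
The plan is to reduce every claim to the test-function identity
$\langle\widehat{B}(u,v),w\rangle=\bigl(curl(u-\alpha\Delta u)\times v,\,w\bigr)$
for $u\in\mathbb{W}$, $v\in\mathbb{V}$, $w\in\mathbb{W}$, and then quote the already-available bounds in Lemma~\ref{Lem B} and the algebraic identity (\ref{curl}). The key enabling fact is the property of $(I+\alpha A)^{-1}$ recorded after Lemma~\ref{Lem GS}: for $f\in\mathbb{H}$ and any $g\in\mathbb{V}$, $((I+\alpha A)^{-1}f,g)_{\mathbb{V}}=(f,g)$. Since $w\in\mathbb{W}\subset\mathbb{V}$ and $\widehat{B}(u,v)\in\mathbb{V}$, one also has $\langle\widehat{B}(u,v),w\rangle=(\widehat{B}(u,v),w)_{\mathbb{V}}$ from the identification $(v,w)_{\mathbb{V}}=\langle v,w\rangle$ stated just after (\ref{Gelfand}). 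Chaining these two identities gives the displayed test-function formula, which is the workhorse of the proof.

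Given this reduction, (\ref{Eq B-01}) follows immediately by applying (\ref{Ineq B 01}) from Lemma~\ref{Lem B} and dominating $\|u\|_{\mathbb{H}^3}$ by $C\|u\|_{\mathbb{W}}$ via (\ref{W-02}); taking the supremum over $w$ with $\|w\|_{\mathbb{W}}\le 1$ yields the $\mathbb{W}^*$-bound. Estimate (\ref{Eq B-02}) is obtained in exactly the same way but invoking the sharper inequality (\ref{Ineq B 02}), which is the only place where the gain from $\|u\|_{\mathbb{W}}$ to $\|u\|_{\mathbb{V}}$ is used.

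For the orthogonality (\ref{Eq B-03}), I specialize the test-function identity at $w=v$ and apply (\ref{curl}) with $\Phi=u-\alpha\Delta u$:
\begin{equation*}
\langle\widehat{B}(u,v),v\rangle=\bigl((curl(u-\alpha\Delta u))\times v,v\bigr)=b(v,u-\alpha\Delta u,v)-b(v,u-\alpha\Delta u,v)=0,
\end{equation*}
which is immediate cancellation. Finally, (\ref{Eq B-04}) is a standard polarization: using bilinearity of $\widehat{B}$ in the second slot, expand $0=\langle\widehat{B}(u,v+w),v+w\rangle$ and apply (\ref{Eq B-03}) to the two diagonal terms to conclude $\langle\widehat{B}(u,v),w\rangle=-\langle\widehat{B}(u,w),v\rangle$.

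The only subtle point, and the place I expect to spend the most care, is the passage $\langle\widehat{B}(u,v),w\rangle=\bigl(curl(u-\alpha\Delta u)\times v,w\bigr)$: one must check that $curl(u-\alpha\Delta u)\times v$ is regular enough (for $u\in\mathbb{W}$, $v\in\mathbb{V}$, the curl factor is in $L^2$ and $v\in\mathbb{H}^1\hookrightarrow\mathbb{L}^4$) that pairing against $w\in\mathbb{W}\hookrightarrow\mathbb{L}^\infty$ is well-defined, so that Lemma~\ref{Lem GS} legitimately identifies $\widehat{B}(u,v)$ with an element of $\mathbb{V}$ (or alternatively, so that we may define $\widehat{B}(u,v)\in\mathbb{W}^*$ directly by the right-hand side and verify consistency with the formula in (\ref{Lem B-01})). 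Everything else is a bookkeeping application of the two estimates already proved in Lemma~\ref{Lem B}.
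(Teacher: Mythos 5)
The paper states this lemma without proof, merely recalling it from \cite{RS-12}, so there is no internal argument to compare against; your derivation---reading $\langle\widehat{B}(u,v),w\rangle=(curl(u-\alpha\Delta u)\times v,w)$ off the property of $(I+\alpha A)^{-1}$, then applying (\ref{Ineq B 01})--(\ref{Ineq B 02}) together with (\ref{W-02}) for the two $\mathbb{W}^*$-bounds, the identity (\ref{curl}) with $\Phi=u-\alpha\Delta u$ for (\ref{Eq B-03}), and polarization for (\ref{Eq B-04})---is correct and is precisely the standard argument behind the cited result. The only point requiring care is the one you flag yourself: for $v\in\mathbb{V}$ the product $curl(u-\alpha\Delta u)\times v$ need not lie in $\mathbb{H}$, so $\widehat{B}(u,v)$ must be interpreted as the element of $\mathbb{W}^*$ defined by the pairing (or the case $v\in\mathbb{V}$ recovered by density from $v\in\mathbb{W}$, where Lemma \ref{Lem GS} applies directly), and your proposed handling of this is adequate.
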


\section{Hypotheses}

In this section, we will state the precise assumptions on the coefficients and collect some preliminary results from \cite{RS-10} and \cite{RS-12}, which will be used in the later sections.

We endow the complete probability space $(\Omega,\mathcal{F},P)$ with the filtration $\mathcal{F}_t$, $t\in[0,T]$. Let $F:\mathbb{V}\times[0,T]\rightarrow\mathbb{V}$
and $G:\mathbb{V}\times[0,T]\rightarrow\mathbb{V}^{\otimes m}$ be given measurable maps. We introduce the following conditions:

{\bf (F)} For any $t\in[0,T]$ and for any $u_1,u_2\in\mathbb{V}$,
\begin{eqnarray}\label{F-01}
            F(0,t)=0,
         \end{eqnarray}
and
\begin{eqnarray}\label{F-02}
\|F(u_1,t)-F(u_2,t)\|_\mathbb{V}
\leq
C\|u_1-u_2\|_\mathbb{V}.
\end{eqnarray}

{\bf (G)} For any $t\in[0,T]$ and for any $u_1,u_2\in\mathbb{V}$,
\begin{eqnarray}\label{G-01}
G(0,t)=0,
\end{eqnarray}
and
\begin{eqnarray}\label{G-02}
\|G(u_1,t)-G(u_2,t)\|_{\mathbb{V}^{\otimes m}}
\leq
C\|u_1-u_2\|_\mathbb{V}.
\end{eqnarray}

We now define  two operators $\widehat{F}$ and $\widehat{G}$ which map $\mathbb{V}\times[0,T]$ into $\mathbb{W}$ and
$\mathbb{W}^{\otimes m}$, respectively, by
\begin{eqnarray*}
\widehat{F}(u,t)=(I+\alpha A)^{-1} F(u,t),\ \ \ \widehat{G}(u,t)=(I+\alpha A)^{-1} G(u,t).
\end{eqnarray*}

{\bf Condition (F)} and {\bf Condition (G)}implies that there exist $C_F$, $C_G$
such that
\begin{eqnarray}\label{Jian F}
\|\widehat{F}(u_1,t)-\widehat{F}(u_2,t)\|_\mathbb{V}\leq C_F\|u_1-u_2\|_\mathbb{V},
\end{eqnarray}
\begin{eqnarray}\label{Jian G}
\|\widehat{G}(u_1,t)-\widehat{G}(u_2,t)\|_{\mathbb{V}^{\otimes m}}\leq C_G\|u_1-u_2\|_\mathbb{V}.
\end{eqnarray}

Alongside (\ref{01}), we consider the abstract stochastic evolution equations
\begin{eqnarray}\label{Abstract}
du^\epsilon(t)+\nu \widehat{A}u^\epsilon(t)dt+\widehat{B}(u^\epsilon(t),u^\epsilon(t))dt=\widehat{F}(u^\epsilon(t),t)+\sqrt{\epsilon}\widehat{G}(u^\epsilon(t),t)dW(t),
\end{eqnarray}
with initial value $u_0=u(0)$, which holds in $\mathbb{W}^*$. It can be proved that a stochastic
process $u^\epsilon$ satisfies (\ref{Abstract}) if and only if it verifies (\ref{01}) in the weak sense of partial differential equations.
Indeed, (\ref{Abstract}) is obtained by applying $(I+\alpha A)^{-1}$ to the equation (\ref{01}).

Now we recall the concept of solution of the problem (\ref{01}) in \cite{RS-12}.

\begin{dfn}\label{Def 01}
A stochastic process $u^\epsilon$ is called a solution of the system (\ref{01}), if the following three conditions hold

1. $u^\epsilon\in L^p(\Omega,\mathcal{F},P;L^\infty([0,T],\mathbb{W})),\ 2\leq p<\infty.$

2. For all $t$, $u^\epsilon(t)$ is $\mathcal{F}_t$-measurable.

3. For any $t\in(0,T]$ and $v\in\mathbb{W}$, the following identity holds almost surely
\begin{eqnarray*}
&  &(u^\epsilon(t)-u^\epsilon(0),v)_{\mathbb{V}}+\int_0^t[\nu ((u^\epsilon(s),v))+(curl (u^\epsilon(s)-\alpha \Delta u^\epsilon(s))\times u^\epsilon(s),v)]ds\\
&=&
    \int_0^t(F(u^\epsilon(s),s),v)ds+\sqrt{\epsilon}\int_0^t(G(u^\epsilon(s),s),v)dW(s).
\end{eqnarray*}
 Or equivalently, for any $t\in(0,T]$, the following equation
\begin{eqnarray*}
u^\epsilon(t)+\int_0^t\Big(\nu \widehat{A}u^\epsilon(s)+\widehat{B}(u^\epsilon(s),u^\epsilon(s))\Big)ds
=
u_0+\int_0^t\widehat{F}(u^\epsilon(s),s)ds+\sqrt{\epsilon}\int_0^t\widehat{G}(u^\epsilon(s),s)dW(s),
\end{eqnarray*}
holds in $\mathbb{W}^*$ $P$-a.s..
\end{dfn}

Using Galerkin approximation scheme for the system (\ref{01}), Razafimandimby and Sango \cite{RS-12}
obtained the following theorem (see Theorem 3.4 and Theorem 4.1 in \cite{RS-12}).
\begin{thm}\label{Solution Existence}
Let $u_0\in\mathbb{W}$. Assume conditions {\bf (F)} and {\bf (G)} hold. Then the system (\ref{01}) or the problem
(\ref{Abstract}) has a unique solution. Moreover,
the solution $u^\epsilon$ admits a version which is continuous in $\mathbb{V}$ with respect to
the strong topology and continuous in $\mathbb{W}$ with respect to
the weak topology.
\end{thm}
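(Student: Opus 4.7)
The plan is to follow the standard Galerkin–compactness strategy adapted to the $\mathbb{W}$-framework, exploiting the cancellation property $\langle\widehat{B}(u,v),v\rangle=0$ to handle the nonlinearity. Let $\{e_i\}_{i\geq 1}$ be the orthonormal basis of $\mathbb{W}$ from (\ref{Basis}) and denote by $\Pi_n$ the orthogonal projection onto $\mathbb{W}_n=\mathrm{span}\{e_1,\dots,e_n\}$. Project (\ref{Abstract}) onto $\mathbb{W}_n$ to get a finite-dimensional Itô SDE for $u^\epsilon_n=\sum_i c^n_i(t)e_i$. Since $\widehat{F}$ and $\widehat{G}$ are Lipschitz in $\mathbb{V}$ by (\ref{Jian F})–(\ref{Jian G}), $\widehat{A}$ is linear bounded on $\mathbb{W}_n$, and $\widehat{B}$ is locally Lipschitz by (\ref{Eq B-01}), classical SDE theory yields a unique local strong solution; global existence then follows from the a priori bounds below.

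The core of the proof is to derive uniform (in $n$ and $\epsilon$) a priori estimates. Apply Itô's formula to $\|u^\epsilon_n(t)\|_\mathbb{V}^2$; using $(\widehat{A}u,u)_\mathbb{V}=\|u\|^2$, the identity (\ref{Eq B-03}), the linear growth of $\widehat{F},\widehat{G}$ coming from (F),(G), together with the Burkholder–Davis–Gundy inequality and Gronwall, one obtains
\begin{equation*}
\mathbb{E}\Bigl[\sup_{t\in[0,T]}\|u^\epsilon_n(t)\|_\mathbb{V}^{2p}+\int_0^T\|u^\epsilon_n(s)\|^2\,ds\Bigr]\leq C(p,T,u_0).
\end{equation*}
To upgrade to the $\mathbb{W}$-estimate, one applies Itô to $|{\rm curl}(u^\epsilon_n-\alpha\Delta u^\epsilon_n)|^2$ (this is the step that uses Lemma \ref{lem Basis} and justifies $u^\epsilon_n\in\mathbb{W}_n\subset\mathbb{H}^4$), noting that $\operatorname{curl}$ applied to $\operatorname{curl}(u-\alpha\Delta u)\times u$ generates a transport-type term whose integral against the curl itself vanishes, leaving only terms controllable by $\|u^\epsilon_n\|_\mathbb{V}$; combined with Lemma \ref{Lem GS} and (\ref{W-02}) this yields $\mathbb{E}\bigl[\sup_{t\leq T}\|u^\epsilon_n(t)\|_\mathbb{W}^{2p}\bigr]\leq C$.

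With these bounds, extract a subsequence converging weakly-$*$ in $L^p(\Omega;L^\infty(0,T;\mathbb{W}))$ and weakly in $L^2(\Omega\times[0,T];\mathbb{W})$ to some $u^\epsilon$. To identify the limit of the nonlinear term one needs strong compactness: the equation gives uniform control of $\partial_t u^\epsilon_n$ in $L^2(\Omega;L^2(0,T;\mathbb{W}^*))$ via (\ref{Eq B-02}), and the embedding $\mathbb{W}\hookrightarrow\mathbb{V}$ is compact, so a stochastic Aubin–Lions type argument (or the Prokhorov/Skorokhod method on pathspace) produces strong convergence in $L^2(\Omega;L^2(0,T;\mathbb{V}))$. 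This is enough to pass to the limit in $\widehat{B}(u^\epsilon_n,u^\epsilon_n)$ in the sense of $\mathbb{W}^*$-valued distributions using (\ref{Eq B-01}), and in $\widehat{F},\widehat{G}$ by their Lipschitz continuity. Thus $u^\epsilon$ satisfies Definition \ref{Def 01}.

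For uniqueness, let $u^\epsilon,\tilde u^\epsilon$ be two solutions, set $z=u^\epsilon-\tilde u^\epsilon$, and apply Itô to $\|z(t)\|_\mathbb{V}^2$. The dangerous term $\langle\widehat{B}(u^\epsilon,u^\epsilon)-\widehat{B}(\tilde u^\epsilon,\tilde u^\epsilon),z\rangle$ is rewritten via (\ref{Eq B-04}) as $\langle\widehat{B}(z,\tilde u^\epsilon),z\rangle=-\langle\widehat{B}(z,z),\tilde u^\epsilon\rangle$, which by Lemma \ref{Lem B} is bounded by $C\|\tilde u^\epsilon\|_\mathbb{W}\|z\|_\mathbb{V}^2$; combined with (F),(G) and Gronwall (using $\tilde u^\epsilon\in L^\infty(0,T;\mathbb{W})$ a.s.), one gets $z\equiv 0$. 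Finally, continuity in $\mathbb{V}$-strong follows from a standard application of Itô's formula to $\|u^\epsilon\|_\mathbb{V}^2$ together with the fact that $u^\epsilon\in L^2(0,T;\mathbb{W})\cap W^{1,2}(0,T;\mathbb{W}^*)$, while weak continuity in $\mathbb{W}$ follows from the $L^\infty(0,T;\mathbb{W})$ bound and a density argument. The main obstacle throughout is the $\mathbb{W}$-level a priori estimate, since the curl of the nonlinear term is not obviously integrable; handling it cleanly is what requires the full regularity of the Galerkin basis supplied by Lemma \ref{lem Basis}.
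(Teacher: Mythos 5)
The paper does not prove this theorem at all: it is quoted from Razafimandimby and Sango \cite{RS-12} (Theorems 3.4 and 4.1 there), whose argument is precisely the Galerkin scheme on the special basis (\ref{Basis}) with $\mathbb{V}$-level and curl-level a priori estimates, compactness, and a Gronwall uniqueness argument exploiting $\langle\widehat{B}(u,v),v\rangle=0$. Your outline reproduces essentially that same approach (and the same devices — the Itô computation for $\|\cdot\|_\mathbb{V}^2$ and for $|{\rm curl}(u-\alpha\Delta u)|^2$, and the rewriting of the difference of nonlinear terms via (\ref{Eq B-04}) — reappear in the paper's own Section 4 estimates), so it is consistent with the cited proof at the level of detail given.
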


\section{Large Deviation Principle}

In this section, we will establish a large deviation principle for system (\ref{01}).   We first recall the general criteria obtained in \cite{Budhiraja-Dupuis}.
\vskip0.3cm
Let $(\Omega,\mathcal{F},\mathbb{P})$ be a probability space with an increasing family $\{\FF_t\}_{0\le t\le T}$ of the sub-$\sigma$-fields of $\FF$ satisfying the usual conditions.
Let $\mathcal{E}$ be a Polish space with the Borel $\sigma$-field $\mathcal{B}(\mathcal{E})$.
    \begin{dfn}\label{Dfn-Rate function}
       \emph{\textbf{(Rate function)}} A function $I: \mathcal{E}\rightarrow[0,\infty]$ is called a rate function on
       $\mathcal{E}$,
       if for each $M<\infty$, the level set $\{x\in\mathcal{E}:I(x)\leq M\}$ is a compact subset of $\mathcal{E}$.
         \end{dfn}
    \begin{dfn}
       \emph{\textbf{(Large deviation principle)}} Let $I$ be a rate function on $\mathcal{E}$.  A family
       $\{X^\e\}$ of $\EE$-valued random elements is  said to satisfy the large deviation principle on $\mathcal{E}$
       with rate function $I$, if the following two conditions
       hold.
       \begin{itemize}
         \item[$(a)$](Upper bound) For each closed subset $F$ of $\mathcal{E}$,
              $$
                \limsup_{\e\rightarrow 0}\e\log\mathbb{P}(X^\e\in F)\leq- \inf_{x\in F}I(x).
              $$
         \item[$(b)$](Lower bound) For each open subset $G$ of $\mathcal{E}$,
              $$
                \liminf_{\e\rightarrow 0}\e\log\mathbb{P}(X^\e\in G)\geq- \inf_{x\in G}I(x).
              $$
       \end{itemize}
    \end{dfn}

\vskip0.3cm

The Cameron-Martin space associated with the Wiener process $\{W(t), t\in[0,T]\}$ is given by
\beq\label{Cameron-Martin}
\HH_0:=\left\{h:[0,T]\rightarrow \mathbb{R}^m; h \  \text{is absolutely continuous and } \int_0^T\|\dot h(s)\|_{\mathbb{R}^m}^2ds<+\infty\right\}.
\nneq
The space $\HH_0$ is a Hilbert space with inner product
 $$
 \langle h_1, h_2\rangle_{\HH_0}:=\int_0^T\langle \dot h_1(s), \dot h_2(s)\rangle_{\mathbb{R}^m}ds.
 $$

Let $\AA$ denote  the class of $\mathbb{R}^m$-valued $\{\FF_t\}$-predictable processes $\phi$ belonging to $\HH_0$ a.s..
Let $S_N=\{h\in \HH_0; \int_0^T\|\dot h(s)\|_{\mathbb{R}^m}^2ds\le N\}$. The set $S_N$ endowed with the weak topology is a Polish space.
Define $\AA_N=\{\phi\in \AA;\phi(\omega)\in S_N, \mathbb{P}\text{-a.s.}\}$.

\vskip0.3cm

 Recall the following result from Budhiraja and Dupuis \cite{Budhiraja-Dupuis}.

\bthm\label{thm BD}{\rm(\cite{Budhiraja-Dupuis}) For $\e>0$, let $\Gamma^\e$ be a measurable mapping from $C([0,T];\mathbb{R}^m)$ into $\EE$.
Let $X^\e:=\Gamma^\e(W(\cdot))$. Suppose that
there  exists a measurable map $\Gamma^0:C([0,T];\mathbb{R}^m)\rightarrow \EE$ such that
\begin{itemize}
   \item[(a)] for every $N<+\infty$ and any family $\{h^\e;\e>0\}\subset \AA_N$ satisfying that $h^\e$ converge in distribution as $S_N$-valued random elements to $h$ as $\e\rightarrow 0$,
    $\Gamma^\e\left(W(\cdot)+\frac{1}{\sqrt\e}\int_0^{\cdot}\dot h^\e(s)ds\right)$ converges in distribution to $\Gamma^0(\int_0^{\cdot}\dot h(s)ds)$ as $\e\rightarrow 0$;
   \item[(b)] for every $N<+\infty$, the set
   $$
 \left\{\Gamma^0\left(\int_0^{\cdot}\dot h(s)ds\right); h\in S_N\right\}
  $$
   is a compact subset of $\EE$.
 \end{itemize}
Then the family $\{X^\e\}_{\e>0}$ satisfies a large deviation principle in $\EE$ with the rate function $I$ given by
\beq\label{rate function}
I(g):=\inf_{\{h\in \HH_0;g=\Gamma^0(\int_0^{\cdot}\dot h(s)ds)\}}\left\{\frac12\int_0^T\|\dot h(s)\|_{\mathbb{R}^m}^2ds\right\},\ g\in\EE,
\nneq
with the convention $\inf\{\emptyset\}=\infty$.
 }\nthm

\subsection{Main Results}
The strong solutions of equation (\ref{01}) determine a measurable mapping $\Gamma^\e(\cdot)$ from $C([0,T];\mathbb{R}^m)$
into $C([0,T],\mathbb{V})$ so that $\Gamma^\e(W)=u^\e$.

Let $N$ be any fixed positive number.
Fixed $g\in S_N$, consider the following deterministic PDE:
\begin{eqnarray}\label{Eq Skep}
& &u^{g}(t)+\int_0^t\Big(\nu \widehat{A}u^{g}(s)+\widehat{B}(u^{g}(s),u^{g}(s))\Big)ds\nonumber\\
&=&
u_0+\int_0^t\widehat{F}(u^{g}(s),s)ds+\int_0^t\widehat{G}(u^{g}(s),s)\dot{g}(s)ds.
\end{eqnarray}
For any family $\{h^\e;\e>0\}\subset \AA_N$, let $u^{h^\e}$ be the solution of the following SPDE
\begin{eqnarray}\label{Eq LDP1}
&  &u^{h^\e}(t)+\int_0^t\Big(\nu \widehat{A}u^{h^\e}(s)+\widehat{B}(u^{h^\e}(s),u^{h^\e}(s))\Big)ds\\
&=&
u_0+\int_0^t\widehat{F}(u^{h^\e}(s),s)ds+\sqrt{\epsilon}\int_0^t\widehat{G}(u^{h^\e}(s),s)dW(s)
+
\int_0^t\widehat{G}(u^{h^\e}(s),s)\dot{h^\e}(s) ds.\nonumber
\end{eqnarray}

Then it is easy to see that $\Gamma^\e\left(W(\cdot)+\frac{1}{\sqrt\e}\int_0^{\cdot}\dot h^\e(s)ds\right)=u^{h^\e}$. Define $\Gamma^0(\int_0^{\cdot}\dot g(s)ds)=u^{g}$.
Let $I:C([0,T],\mathbb{V})\rightarrow [0,\infty]$ be defined as in (\ref{rate function}).

\bthm\label{th main}
Assume that the Lipschitz conditions {\bf (F)} and {\bf (G)} hold. Then the solution family $\{u^\epsilon\}_{\epsilon>0}$ of
system (\ref{01}) satisfies
 a large deviation principle on $C([0,T],\mathbb{V})$ with the good rate function $I$ with respect to the topology of uniform convergence.

\nthm

\noindent{\bf Proof of Theorem \ref{th main}.}

According to Theorem \ref{thm BD}, we need to prove that Condition (a), (b) are fulfilled. The verification of Condition (a) will be given
 by Theorem \ref{Thm condition 02} below. Condition (b) will be established in Theorem \ref{Thm condition 01} below.

%

\subsection{Proof of Theorem \ref{th main}}\label{Sub01}

From now on, we denote by $C$ any generic constant which may change from one line to another.


First we will establish the following a priori estimate.
\begin{lem}\label{Lemma Esta1}
There exists $\epsilon_0>0$ such that
\begin{eqnarray}\label{Estation-02}
\sup_{\epsilon\in(0,\epsilon_0)}E(\sup_{s\in[0,T]}\|u^{h^\e}(s)\|^p_\mathbb{W})\leq C_{p,N},\ \text{for any }2\leq p<\infty,
\end{eqnarray}
here $C_{p,N}$ is independent of $\e$.
\end{lem}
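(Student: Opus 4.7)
By the norm equivalence on $\mathbb{W}$ (Lemma~2.1), $\|u\|_\mathbb{W}^2$ is comparable to $\|u\|_\mathbb{V}^2+|\mathrm{curl}(u-\alpha\Delta u)|^2$, so it suffices to bound these two quantities separately. The plan is a two-step energy argument carried out on the Galerkin approximations in the basis $\{e_i\}$ from Lemma~\ref{lem Basis} (so that all formal manipulations are justified), followed by passage to the limit. The pervading constraint is that $\dot h^\e$ is only bounded in $L^2([0,T];\mathbb{R}^m)$ with norm $\le\sqrt N$, so every appearance of the controlled drift $\widehat G(u,s)\dot h^\e(s)$ must be absorbed into a Gronwall factor of the form $C(1+|\dot h^\e(s)|^2)\|u\|_\mathbb{V}^2$ via Cauchy--Schwarz.

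\textbf{Step 1: the $\mathbb V$-bound.} Apply It\^o's formula to $\|u^{h^\e}(t)\|_\mathbb{V}^2$ using \eqref{Eq LDP1}. The nonlinearity vanishes by \eqref{Eq B-03}, and $(\widehat A u,u)_\mathbb V=\|u\|^2$ is non-negative. The terms involving $\widehat F$ and $\widehat G$ are controlled using \eqref{Jian F}, \eqref{Jian G} together with $\widehat F(0,\cdot)=\widehat G(0,\cdot)=0$, BDG handles the stochastic integral, and the controlled drift is estimated by
\[
\bigl|(\widehat G(u^{h^\e},s)\dot h^\e(s),u^{h^\e}(s))_\mathbb V\bigr|\le C\,\|u^{h^\e}(s)\|_\mathbb V^2\,(1+|\dot h^\e(s)|^2).
\]
Gronwall's inequality combined with $\int_0^T|\dot h^\e(s)|^2 ds\le N$ then yields $\sup_\e E\sup_{s\le T}\|u^{h^\e}(s)\|_\mathbb V^p\le C_{p,N}$ for every $p\ge 2$.

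\textbf{Step 2: the vorticity bound.} Taking the curl of the original momentum equation \eqref{01} kills the pressure, and a standard 2D identity converts the nonlinear contribution into the transport term $u^{h^\e}\cdot\nabla\omega^\e$, where $\omega^\e:=\mathrm{curl}(u^{h^\e}-\alpha\Delta u^{h^\e})$. Applying It\^o to $|\omega^\e|^2$, this transport term drops out via $\int u\cdot\nabla|\omega|^2\,dx=0$ (incompressibility together with $u|_{\partial\mathcal O}=0$), and integration by parts of the viscous contribution $-\nu\Delta(\mathrm{curl}\,u^{h^\e})$ against $\omega^\e=\mathrm{curl}\,u^{h^\e}-\alpha\Delta(\mathrm{curl}\,u^{h^\e})$ produces a positive dissipation plus a lower-order term bounded by $\|u^{h^\e}\|_\mathbb V^2$, which is already controlled by Step~1. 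The forcing, the BDG bound on the curl-noise, and the controlled drift (absorbed once more through $(1+|\dot h^\e|^2)$) close a Gronwall argument for $E\sup_s|\omega^\e(s)|^p$, and the norm equivalence then gives \eqref{Estation-02}. Higher moments $p>2$ are obtained by iterating the argument, either by applying It\^o to $(\|u\|_\mathbb V^2+|\omega|^2)^{p/2}$ or by a bootstrap on the BDG inequality.

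\textbf{Main obstacle.} The formal curl computation and the integration by parts in Step~2 require regularity that an $L^\infty(0,T;\mathbb W)$ solution does not automatically possess, and the dissipation integration by parts produces boundary contributions because $u^{h^\e}$ vanishes on $\partial\mathcal O$ but $\mathrm{curl}\,u^{h^\e}$ need not. The rigorous implementation is therefore on the finite-dimensional Galerkin approximations in the basis $\{e_i\}$ of Lemma~\ref{lem Basis}, where the $\mathbb W$-inner product is diagonalized and all calculations are clean, followed by passage to the limit using the uniqueness supplied by Theorem~\ref{Solution Existence}. This is essentially the scheme of the a priori estimates in \cite{RS-12}, adapted here by carrying along the extra term $\widehat G(u^{h^\e},s)\dot h^\e(s)\,ds$ and controlling it uniformly in $\e$ and in $h^\e\in\AA_N$ through the $L^2$-budget $\int_0^T|\dot h^\e(s)|^2\,ds\le N$.
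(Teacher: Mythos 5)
Your proposal is correct and is essentially the paper's own argument: Galerkin approximation in the eigenbasis of (\ref{Basis}), an It\^o estimate first for $\|u^M\|_\mathbb{V}^p$ and then for $\|u^M\|_*^p=|curl(u^M-\alpha\triangle u^M)|^p$ (using the identity $curl\big(curl(u-\alpha\triangle u)\times u\big)=(u\cdot\nabla)curl(u-\alpha\triangle u)$ so the transport term drops), with the controlled drift absorbed through $\int_0^T\|\dot h^\e(s)\|_{\mathbb{R}^m}^2ds\le N$, followed by BDG, Gronwall with stopping times, and passage to the limit as in \cite{RS-12}. The only implementation-level differences are that the paper avoids your boundary-term worry by rewriting $-\nu\triangle u^M=\frac{\nu}{\alpha}(u^M-\alpha\triangle u^M)-\frac{\nu}{\alpha}u^M$ algebraically instead of integrating by parts, and it controls the $\lambda_i$-weighted quadratic-variation sums in the Galerkin scheme via the auxiliary generalized Stokes solution $\widetilde{G}$, details fully consistent with your sketch.
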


Set $\mathbb{W}_M=Span(e_1,\cdots,e_M)$. Let $u^M\in\mathbb{W}_M$ be the Galerkin approximations of (\ref{Eq LDP1}) satisfying
\begin{eqnarray}\label{Eq Galerkin}
& &d(u^M,e_i)_\mathbb{V}+\nu((u^M,e_i))dt+b(u^M,u^M,e_i)dt-\alpha b(u^M,\triangle u^M,e_i)dt+\alpha b(e_i,\triangle u^M,u^M)dt\nonumber\\
&=&
 (F(u^M,t),e_i)dt+\sqrt{\e}(G(u^M,t),e_i)dW(t)+(G(u^M,t)\dot{h^\e}(t),e_i)dt
\end{eqnarray}
for any $i\in\{1,2,\cdots,M\}$.

As in the proof of Theorem 3.4 in \cite{RS-12}, one can show that $u^M\rightarrow u^{h^\e}$ weakly-* in $L^p(\Omega,\mathcal{F},P,L^\infty([0,T],\mathbb{W}))$
for any $p\geq 2$. Hence Lemma \ref{Lemma Esta1} will follow from the following Lemma \ref{Lem G 02}.
\begin{lem}\label{Lem G 02}
For any $4\leq p<\infty$,  we have, for any $\e\in(0,1)$
\begin{eqnarray}\label{Eq G02 01}
E\Big(\sup_{s\in[0,T]}\|u^M(s)\|^p_\mathbb{V}\Big)\leq C_{p,N},
\end{eqnarray}
and
\begin{eqnarray}\label{Eq G02 02}
E\Big(\sup_{s\in[0,T]}\|u^M(s)\|^p_\mathbb{W}\Big)\leq C_{p,N}.
\end{eqnarray}

\end{lem}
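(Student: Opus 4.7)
The plan is to derive both \eqref{Eq G02 01} and \eqref{Eq G02 02} by classical energy estimates at the Galerkin level, closely following the scheme of \cite{RS-12} but accommodating the extra controlled drift $\widehat G(u^M,t)\dot h^\epsilon(t)$. Two cancellation mechanisms are the backbone of the argument: the vanishing of the antisymmetric term $\langle\widehat B(u,u),u\rangle_{\mathbb V}$ recorded in \eqref{Eq B-03}, and the transport-type cancellation for the planar generalised-vorticity equation when tested against the vorticity itself. The essentially new point compared to \cite{RS-12} is that $\widehat G(u^M,t)\dot h^\epsilon(t)$ is only $L^1$ in time, with $L^2$-in-time norm bounded by $\sqrt N$; this is absorbed by a Grönwall step against the coefficient $\|\dot h^\epsilon(s)\|_{\mathbb R^m}^2$.

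For \eqref{Eq G02 01}, I would apply Itô's formula to $\|u^M(t)\|_{\mathbb V}^p$. Because $\{e_i\}$ is orthogonal in $(\cdot,\cdot)_{\mathbb V}$, multiplying the $i$-th equation in \eqref{Eq Galerkin} by the $i$-th coefficient of $u^M$ and summing reproduces $\tfrac12 d\|u^M\|_{\mathbb V}^2$. The three nonlinear $b$-terms collapse, via the identity \eqref{curl} with $v=w=u^M$, into $(\mathrm{curl}(u^M-\alpha\triangle u^M)\times u^M,u^M)$, which vanishes. For the forcing and control contributions I use the Lipschitz bounds \eqref{Jian F}, \eqref{Jian G} (recalling $F(0,t)=G(0,t)=0$) together with the duality identity $(\widehat F,u^M)_{\mathbb V}=(F,u^M)$ to obtain
\[
  |(\widehat F(u^M,t),u^M)_{\mathbb V}|+|(\widehat G(u^M,t)\dot h^\epsilon(t),u^M)_{\mathbb V}|\leq C\bigl(1+\|\dot h^\epsilon(t)\|_{\mathbb R^m}\bigr)\|u^M(t)\|_{\mathbb V}^2,
\]
and the Itô correction from the noise is controlled by $C\|u^M\|_{\mathbb V}^2$ via \eqref{Jian G}. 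Raising to the power $p/2$, a Burkholder--Davis--Gundy step on the martingale part, followed by $\sup_{s\le t}$, expectation, and Grönwall using $\int_0^T\|\dot h^\epsilon(s)\|_{\mathbb R^m}^2\,ds\le N$, yields $C_{p,N}$ independent of $\epsilon\in(0,1)$.

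For \eqref{Eq G02 02}, \eqref{W-02} reduces the task to an $L^p$-bound for $q^M:=\mathrm{curl}(u^M-\alpha\triangle u^M)$. Writing $\omega^M=\mathrm{curl}\,u^M$, so that $q^M=\omega^M-\alpha\triangle\omega^M$, and noting that curl is meaningful on $u^M$ since each $e_i\in\mathbb H^4(\mathcal O)$ by Lemma~\ref{lem Basis}, I apply curl to the Galerkin system and exploit the planar identity $\mathrm{curl}\bigl((\mathrm{curl}\,\Phi)\times u\bigr)=u\cdot\nabla\,\mathrm{curl}\,\Phi$ valid for divergence-free $u$, producing a transport-type equation for $q^M$. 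Itô on $|q^M|^p$ then exploits the cancellation $(u^M\cdot\nabla q^M,q^M)=0$ coming from $\mathrm{div}\,u^M=0$ and the Dirichlet boundary condition, the non-negativity of the viscous contribution $\nu(|\nabla\omega^M|^2+\alpha|\triangle\omega^M|^2)$ which can simply be dropped, and the Lipschitz hypotheses \eqref{F-02}, \eqref{G-02} rewritten as $|\mathrm{curl}\,F(u^M,t)|+\|\mathrm{curl}\,G(u^M,t)\|\leq C\|u^M\|_{\mathbb W}$ via \eqref{W = W}. A second BDG plus Grönwall pass, combined with the $\mathbb V$-bound from the previous paragraph, closes the argument.

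The genuinely delicate point is the $\mathbb W$-step: one must verify that applying curl to the Galerkin equation is legal and that $(u^M\cdot\nabla q^M,q^M)=0$ holds in the projected sense, which rests entirely on the smoothness and algebraic structure of the eigenbasis \eqref{Basis} supplied by Lemma~\ref{lem Basis}. Once these are in place, the controlled drift $\widehat G(u^M,t)\dot h^\epsilon(t)$ causes no additional difficulty: $\|\dot h^\epsilon(\cdot)\|_{\mathbb R^m}^2$ lies in $L^1([0,T])$ uniformly over $\AA_N$, so the Grönwall coefficient is integrable and produces a bound depending only on $N$, $p$, and the fixed data.
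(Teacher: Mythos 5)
Your $\mathbb{V}$-estimate is essentially the paper's own argument (It\^o on $\|u^M\|^p_{\mathbb{V}}$, cancellation of the trilinear term, Lipschitz bounds, BDG, absorption/Gronwall with $\int_0^T\|\dot h^\e\|^2_{\mathbb{R}^m}ds\le N$), so that part is fine. The gap is in the $\mathbb{W}$-step. The Galerkin system \eqref{Eq Galerkin} is a projected equation: it holds only when tested against $e_1,\dots,e_M$, so ``applying curl to the Galerkin system'' is not an available operation, and the cancellation $\big(u^M\cdot\nabla q^M,q^M\big)=0$ for $q^M=\mathrm{curl}(u^M-\alpha\triangle u^M)$ does not hold ``in the projected sense'' without an argument; with a generic basis the projection of the transported vorticity would destroy it. You flag this as the delicate point and attribute its resolution to the regularity of the basis (Lemma \ref{lem Basis}), but you never supply the mechanism, and that mechanism is the heart of the paper's proof: by Lemma \ref{Lem GS} one introduces $v^M$ and $\widetilde G$ solving the generalized Stokes problem with data $\phi(u^M)$ and $G(u^M,t)$, rewrites the Galerkin equation as $d(u^M,e_i)_{\mathbb{V}}+(v^M,e_i)_{\mathbb{V}}dt=\sqrt{\e}\,(\widetilde G,e_i)_{\mathbb{V}}dW$, and then uses the spectral relation \eqref{Basis} to convert it into $d(u^M,e_i)_{\mathbb{W}}+(v^M,e_i)_{\mathbb{W}}dt=\sqrt{\e}\,(\widetilde G,e_i)_{\mathbb{W}}dW$; summing over $i$ (legitimate because $\{e_i\}$ is $\mathbb{W}$-orthonormal and $u^M\in\mathbb{W}_M$) and subtracting the $\mathbb{V}$-energy identity isolates $d\|u^M\|_*^2$ with the \emph{unprojected} drift $\big(q^M,\mathrm{curl}\,\phi(u^M)\big)$, and only at that stage do the planar identity and $\mathrm{div}\,u^M=0$ kill the transport term exactly. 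Without this device (or an equivalent one) your second step is an assertion, not a proof; note also that what is used is the eigenvalue relation \eqref{Basis} itself, not merely the $\mathbb{H}^4$-smoothness of the $e_i$.

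A second, more technical flaw: you claim the viscous contribution becomes $\nu\big(|\nabla\omega^M|^2+\alpha|\triangle\omega^M|^2\big)\ge 0$ and can be dropped. That identity requires integrating $(-\triangle\omega^M,\omega^M)$ by parts, but $\omega^M=\mathrm{curl}\,u^M$ carries no boundary condition, so the boundary terms do not vanish. The paper avoids this by writing $-\nu\triangle u^M=\frac{\nu}{\alpha}\big((u^M-\alpha\triangle u^M)-u^M\big)$, so the viscous term contributes $\frac{\nu}{\alpha}\|u^M\|_*^2-\frac{\nu}{\alpha}\big(q^M,\mathrm{curl}\,u^M\big)$, the first piece having a favorable sign and the second controlled through \eqref{Eq 9} by $\|u^M\|_*\|u^M\|_{\mathbb{V}}$; similarly your bound for $\mathrm{curl}\,F(u^M,t)$ should come from $\|F(u,t)\|_{\mathbb{V}}\le C\|u\|_{\mathbb{V}}$ together with \eqref{Eq 9}, not from \eqref{W = W}. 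These repairs are exactly what the paper's proof of \eqref{Eq G02 02} supplies.
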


\begin{proof}
Set $\|v\|_*=|curl(v-\alpha\triangle v)|$ for any $v\in\mathbb{W}$. Define
$$\tau_J=\inf\{t\geq0,\|u^M(s)\|_\mathbb{V}+\|u^M(s)\|_*\geq J\}.$$

Applying ${\rm It\hat{o}}$'s formula, we have
\begin{eqnarray*}
& &d(u^M,e_i)^2_{\mathbb{V}}\nonumber\\
& &+
2(u^M,e_i)_{\mathbb{V}}\Big[\nu((u^M,e_i))+b(u^M,u^M,e_i)-\alpha b(u^M,\triangle u^M,e_i)+\alpha b(e_i,\triangle u^M,u^M)\Big]dt\nonumber\\
&=&
2(u^M,e_i)_{\mathbb{V}}\Big[(F(u^M,t),e_i)dt+\sqrt{\e}(G(u^M,t),e_i)dW(t)+(G(u^M,t)\dot{h^\e}(t),e_i)dt\Big]\nonumber\\
& &+
\e(G(u^M,t),e_i)(G(u^M,t),e_i)' dt.
\end{eqnarray*}
Noting that $\|u^M\|^2_\mathbb{V}=\sum_{i=1}^M\lambda_i(u^M,e_i)^2_\mathbb{V}$,
\begin{eqnarray}\label{Eq G ito}
&  &d\|u^M\|^2_\mathbb{V}+2\nu\|u^M\|^2dt\nonumber\\
&=&
2(F(u^M,t),u^M)dt+2\sqrt{\e}(G(u^M,t),u^M)dW(t)+2(G(u^M,t)\dot{h^\e}(t),u^M)dt\nonumber\\
&  &+
\e\sum_{i=1}^M\lambda_i(G(u^M,t),e_i)(G(u^M,t),e_i)' dt,
\end{eqnarray}
here we have used the fact that $b(u^M,u^M,u^M)=0$. Applying ${\rm It\hat{o}}$'s formula to $\|u^M\|^p_\mathbb{V}$, we have
\begin{eqnarray}\label{Eq G 00}
d\|u^M\|^p_\mathbb{V}
&=&d(\|u^M\|^2_\mathbb{V})^{p/2}=
\frac{p}{2}(\|u^M\|^2_\mathbb{V})^{\frac{p}{2}-1}d\|u^M\|^2_\mathbb{V}
  +
\frac{p}{4}(\frac{p}{2}-1)(\|u^M\|^2_\mathbb{V})^{\frac{p}{2}-2}d\langle\|u^M\|^2_\mathbb{V}\rangle\nonumber\\
&=&
\frac{p}{2}\|u^M\|_\mathbb{V}^{p-2}
    \Big(
         -2\nu\|u^M\|^2dt+2(F(u^M,t),u^M)dt+2\sqrt{\e}(G(u^M,t),u^M)dW(t)\nonumber\\
         & &+2(G(u^M,t)\dot{h^\e}(t),u^M)dt
          +
         \e\sum_{i=1}^M\lambda_i(G(u^M,t),e_i)(G(u^M,t),e_i)' dt
    \Big)\nonumber\\
     & &+
     \e p(\frac{p}{2}-1)\|u^M\|_\mathbb{V}^{p-4}(G(u^M,t),u^M)(G(u^M,t),u^M)' dt.
\end{eqnarray}

Recall that $\mathcal{P}$ is the ${\rm Poincar\acute{e}}$'s constant. We have
\begin{eqnarray}\label{Eq ito 02}
|(F(u^M(s),s),u^M(s))|
\leq
C\mathcal{P}^2\|u^M(s)\|^2
\leq
\frac{C\mathcal{P}^2}{\alpha}\|u^M(s)\|^2_\mathbb{V},
\end{eqnarray}
and
\begin{eqnarray}\label{Eq G 02}
|(G(u^M,t),u^M)(G(u^M,t),u^M)'|
\leq
C\|u^M\|^4_\mathbb{V}.
\end{eqnarray}

By  Burkholder-Davis-Gundy inequalities,
\begin{eqnarray}\label{Eq G 03}
&  &E\Big(\sup_{t\in[0,T]}|\int_0^{t\wedge\tau_J}\|u^M(s)\|_\mathbb{V}^{p-2}(G(u^M(s),s),u^M(s))dW(s)|\Big)\nonumber\\
&\leq&
CE\Big(\int_0^{T\wedge\tau_J}\|u^M(s)\|_\mathbb{V}^{2p}ds\Big)^{1/2}\nonumber\\
&\leq&
CE\Big[\sup_{t\in[0,{T\wedge\tau_J}]}\|u^M(t)\|_\mathbb{V}^{p/2}\Big(\int_0^{T\wedge\tau_J}\|u^M(s)\|_\mathbb{V}^{p}ds\Big)^{1/2}\Big]\nonumber\\
&\leq&
\delta E\Big(\sup_{t\in[0,{T\wedge\tau_J}]}\|u^M(t)\|_\mathbb{V}^{p}\Big)+C_\delta E\Big(\int_0^{T\wedge\tau_J}\|u^M(s)\|_\mathbb{V}^{p}ds\Big).
\end{eqnarray}

By ${\rm H\ddot{o}lder}$'s inequality and Young's inequality, for any $\eta>0$
\begin{eqnarray}\label{Eq G 04}
& &\int_0^{t\wedge\tau_J}\|u^M(s)\|_\mathbb{V}^{p-2}|(G(u^M(s),s)\dot{h^\e}(s),u^M(s))|ds\nonumber\\
&\leq&
C\int_0^{t\wedge\tau_J}\|u^M(s)\|_\mathbb{V}^{p}\|\dot{h^\e}(s)\|_{\mathbb{R}^m}ds\nonumber\\
&\leq&
C\sup_{s\in[0,{T\wedge\tau_J}]}\|u^M(s)\|_\mathbb{V}^{p-1}\int_0^{t\wedge\tau_J}\|u^M(s)\|_\mathbb{V}\|\dot{h^\e}(s)\|_{\mathbb{R}^m}ds\nonumber\\
&\leq&
\eta \sup_{s\in[0,{T\wedge\tau_J}]}\|u^M(s)\|_\mathbb{V}^{p}+C_\eta(\int_0^{t\wedge\tau_J}\|u^M(s)\|_\mathbb{V}\|\dot{h^\e}(s)\|_{\mathbb{R}^m}ds)^p\nonumber\\
&\leq&
\eta \sup_{s\in[0,{T\wedge\tau_J}]}\|u^M(s)\|_\mathbb{V}^{p}+C_\eta(\int_0^{t\wedge\tau_J}\|u^M(s)\|^2_\mathbb{V}ds\int_0^{t\wedge\tau_J}\|\dot{h^\e}(s)\|^2_{\mathbb{R}^m}ds)^{p/2}\nonumber\\
&\leq&
\eta \sup_{s\in[0,{T\wedge\tau_J}]}\|u^M(s)\|_\mathbb{V}^{p}+C_\eta N^{p/2}(\int_0^{t\wedge\tau_J}\|u^M(s)\|^2_\mathbb{V}ds)^{p/2}\nonumber\\
&\leq&
\eta \sup_{s\in[0,{T\wedge\tau_J}]}\|u^M(s)\|_\mathbb{V}^{p}+C_\eta N^{p/2}T^{\frac{p-2}{2}}\int_0^{t\wedge\tau_J}\|u^M(s)\|^p_\mathbb{V}ds.
\end{eqnarray}

By Lemma \ref{Lem GS}, there exists unique solution $\widetilde{G}(u^M,t)\in\mathbb{W}^{\otimes m}$ satisfying
\begin{eqnarray*}
\widetilde{G}-\alpha \triangle \widetilde{G}=G(u^M,t)\ {\rm in}\ \mathcal{O},\nonumber\\
{\rm div}\ \widetilde{G}=0\ {\rm in}\ \mathcal{O},\\
\widetilde{G}=0\ {\rm on}\ \partial \mathcal{O}.\nonumber
\end{eqnarray*}
Moreover,
$$
(\widetilde{G}(u^M,t),e_i)_\mathbb{V}=(G(u^M,t),e_i),\ \ \forall i\in\{1,2,\cdots,M\},
$$
and there exists a positive constant $C_0$ such that
$$
\|\widetilde{G}(u^M,t)\|_{\mathbb{W}^{\otimes m}}
\leq
C_0\|G(u^M,t)\|_{\mathbb{V}^{\otimes m}}.
$$
Hence by (\ref{Basis}),
\begin{eqnarray}\label{Eq ito 04}
& &\sum_{i=1}^M\lambda_i(G(u^M(s),s),e_i)(G(u^M(s),s),e_i)'\nonumber\\
&=&
\sum_{i=1}^M\lambda_i(\widetilde{G}(u^M(s),s),e_i)_\mathbb{V}(\widetilde{G}(u^M(s),s),e_i)_\mathbb{V}'\nonumber\\
&=&
\sum_{i=1}^M\frac{1}{\lambda_i}(\widetilde{G}(u^M(s),s),e_i)_\mathbb{W}(\widetilde{G}(u^M(s),s),e_i)_\mathbb{W}'\nonumber\\
&\leq&
\frac{1}{\lambda_1}\|\widetilde{G}(u^M(s),s)\|^2_{\mathbb{W}^{\otimes m}}\nonumber\\
&\leq&
\frac{C_0}{\lambda_1}\|G(u^M(s),s)\|^2_{\mathbb{V}^{\otimes m}}\nonumber\\
&\leq&
C\|u^M(s)\|^2_\mathbb{V}.
\end{eqnarray}

Combining (\ref{Eq G 00})--(\ref{Eq ito 04}), we have
\begin{eqnarray}\label{Eq G 05}
& &(1-p\eta-\sqrt{\e}p\delta)E\Big(\sup_{t\in[0,{T\wedge\tau_J}]}\|u^M(t)\|^p_\mathbb{V}\Big)+2\nu E\int_0^{T\wedge\tau_J}\frac{p}{2}\|u^M\|_\mathbb{V}^{p-2}\|u^M\|^2ds\nonumber\\
&\leq&\|u(0)\|^p_\mathbb{V}
    +C_{\eta,N, p,\delta}E\int_0^{T\wedge\tau_J}\|u^M(s)\|^p_\mathbb{V}ds.
\end{eqnarray}
Choosing $\eta=\delta=\frac{1}{4p}$, then for any $\e\in(0,1)$
\begin{eqnarray}\label{Eq G 06}
E\Big(\sup_{t\in[0,{T\wedge\tau_J}]}\|u^M(t)\|^p_\mathbb{V}\Big)
\leq
    C_{p,N}.
\end{eqnarray}
Let $J\rightarrow\infty$ to obtain (\ref{Eq G02 01}).\\

Now we prove (\ref{Eq G02 02}).

Setting
$$
\phi(u^M)=-\nu\triangle u^M+curl(u^M-\alpha\triangle u^M)\times u^M-F(u^M,t)-G(u^M,t)\dot{h^\e}(t),
$$
we have
$$
d(u^M,e_i)_\mathbb{V}+(\phi(u^M),e_i)dt=\sqrt{\e}(G(u^M,t),e_i)dW(t).
$$

Note that $\phi(u^M)\in\mathbb{H}^1(\mathcal{O})$. By Lemma \ref{Lem GS}, there exists a unique solution $v^M\in\mathbb{W}$ satisfying
\begin{eqnarray*}
v^M-\alpha \triangle v^M=\phi(u^M)\ {\rm in}\ \mathcal{O},\nonumber\\
{\rm div}\ v^M=0\ {\rm in}\ \mathcal{O},\\
v^M=0\ {\rm on}\ \partial \mathcal{O}.\nonumber
\end{eqnarray*}
Moreover,
$$
(v^M,e_i)_\mathbb{V}=(\phi(u^M),e_i),\ \ \forall i\in\{1,2,\cdots,M\}.
$$
Thus
$$
d(u^M,e_i)_\mathbb{V}+(v^M,e_i)_\mathbb{V}dt=\sqrt{\e}(G(u^M,t),e_i)dW(t).
$$

We introduce $\widetilde{G}$ as in the proof of (\ref{Eq ito 04}) to get
$$
\lambda_i(G(u^M,t),e_i)=(\widetilde{G}(u^M,t),e_i)_{\mathbb{W}}.
$$
By (\ref{Basis}),
$$
d(u^M,e_i)_\mathbb{W}+(v^M,e_i)_\mathbb{W}dt=\sqrt{\e}(\widetilde{G}(u^M,t),e_i)_{\mathbb{W}}dW(t).
$$
Applying ${\rm It\hat{o}}$'s formula, we have
\begin{eqnarray*}
&  &d(u^M,e_i)^2_\mathbb{W}+2(u^M,e_i)_\mathbb{W}(v^M,e_i)_\mathbb{W}dt\\
&=&
2\sqrt{\e}(u^M,e_i)_\mathbb{W}(\widetilde{G}(u^M,t),e_i)_{\mathbb{W}}dW(t)
+
\e(\widetilde{G}(u^M,t),e_i)_{\mathbb{W}}(\widetilde{G}(u^M,t),e_i)_{\mathbb{W}}'dt,
\end{eqnarray*}
and
\begin{eqnarray*}
&  &d\|u^M\|^2_\mathbb{W}+2(v^M,u^M)_\mathbb{W}dt\\
&=&
2\sqrt{\e}(\widetilde{G}(u^M,t),u^M)_{\mathbb{W}}dW(t)
+
\e\sum_{i=1}^M(\widetilde{G}(u^M,t),e_i)_{\mathbb{W}}(\widetilde{G}(u^M,t),e_i)_{\mathbb{W}}'dt.
\end{eqnarray*}
 By (\ref{W}) we rewrite the above
equation as follows
\begin{eqnarray*}
&  &d[\|u^M\|^2_\mathbb{V}+\|u^M\|^2_*]+2\Big[(v^M,u^M)_\mathbb{V}+\Big(curl(u^M-\alpha \triangle u^M), curl(v^M-\alpha \triangle v^M)\Big)\Big]dt\\
&=&
2\sqrt{\e}(\widetilde{G}(u^M,t),u^M)_{\mathbb{V}}dW(t)
+
\e\sum_{i=1}^M\lambda_i^2(\widetilde{G}(u^M,t),e_i)_{\mathbb{V}}(\widetilde{G}(u^M,t),e_i)_{\mathbb{V}}'dt\\
& &+
2\sqrt{\e}\Big(curl(u^M-\alpha \triangle u^M), curl(\widetilde{G}(u^M,t)-\alpha \triangle \widetilde{G}(u^M,t))\Big)dW(t).
\end{eqnarray*}
By the definition of $v^M$ and $\widetilde{G}$, we obtain
\begin{eqnarray*}
&  &d[\|u^M\|^2_\mathbb{V}+\|u^M\|^2_*]+2\Big[(\phi(u^M),u^M)+\Big(curl(u^M-\alpha \triangle u^M), curl(\phi(u^M))\Big)\Big]dt\\
&=&
2\sqrt{\e}(G(u^M,t),u^M)dW(t)
+
\e\sum_{i=1}^M\lambda_i^2(G(u^M,t),e_i)(G(u^M,t),e_i)'dt\\
& &+
2\sqrt{\e}\Big(curl(u^M-\alpha \triangle u^M), curl(G(u^M,t))\Big)dW(t).
\end{eqnarray*}
Subtracting (\ref{Eq G ito}) from the above equation, we obtain
\begin{eqnarray*}
&  &d\|u^M\|^2_*+2\Big(curl(u^M-\alpha \triangle u^M), curl(\phi(u^M))\Big)dt\\
&=&
\e\sum_{i=1}^M(\lambda_i^2-\lambda_i)(G(u^M,t),e_i)(G(u^M,t),e_i)'dt\\
&&+
2\sqrt{\e}\Big(curl(u^M-\alpha \triangle u^M), curl(G(u^M,t))\Big)dW(t).
\end{eqnarray*}

Since
$$
curl\Big(curl(u^M-\alpha\triangle u^M)\times u^M\Big)
=
(u^M\cdot \nabla)(curl(u^M-\alpha\triangle u^M)),
$$
we have
\begin{eqnarray*}
& &\Big(curl(u^M-\alpha \triangle u^M), curl(\phi(u^M))\Big)\\
&=&
 \Big(curl(u^M-\alpha \triangle u^M),curl(-\nu \triangle u^M)\Big)
-
 \Big(curl(u^M-\alpha \triangle u^M),curl(F(u^M,t)+G(u^M)\dot{h^\e}(t))\Big)\\
& &+
 \Big(curl(u^M-\alpha \triangle u^M),curl\Big(curl(u^M-\alpha\triangle u^M)\times u^M\Big)\Big)\\
&=&
\frac{\nu}{\alpha}\|u^M\|^2_*-\frac{\nu}{\alpha}\Big(curl(u^M-\alpha \triangle u^M),curl\ u^M\Big)\\
&&-
\Big(curl(u^M-\alpha \triangle u^M),curl(F(u^M,t)+G(u^M)\dot{h^\e}(t))\Big).
\end{eqnarray*}
Hence
\begin{eqnarray}\label{eq 000}
&  &d\|u^M\|^2_*+\frac{2\nu}{\alpha}\|u^M\|^2_*dt-\frac{2\nu}{\alpha}\Big(curl(u^M-\alpha \triangle u^M),curl\ u^M\Big)dt\nonumber\\
& &-2\Big(curl(u^M-\alpha \triangle u^M),curl(F(u^M,t)+G(u^M,t)\dot{h^\e}(t))\Big)dt\nonumber\\
&=&
\e\sum_{i=1}^M(\lambda_i^2-\lambda_i)(G(u^M,t),e_i)(G(u^M,t),e_i)'dt\nonumber\\
&&+
2\sqrt{\e}\Big(curl(u^M-\alpha \triangle u^M), curl(G(u^M,t))\Big)dW(t).
\end{eqnarray}
Applying ${\rm It\hat{o}}$'s formula, we have
\begin{eqnarray}\label{Eq 100}
d\|u^M\|^p_*
&=&d(\|u^M\|^2_*)^{p/2}
=\frac{p}{2}(\|u^M\|^2_*)^{p/2-1}d\|u^M\|^2_*+\frac{p}{4}(\frac{p}{2}-1)(\|u^M\|^2_*)^{p/2-2}d\langle\|u^M\|^2_*\rangle\nonumber\\
&=&
\frac{p}{2}\|u^M\|_*^{p-2}
\Big(-\frac{2\nu}{\alpha}\|u^M\|^2_*dt+\frac{2\nu}{\alpha}\Big(curl(u^M-\alpha \triangle u^M),curl\ u^M\Big)dt\nonumber\\
& &+ 2\Big(curl(u^M-\alpha \triangle u^M),curl(F(u^M,t)+G(u^M,t)\dot{h^\e}(t))\Big)dt\nonumber\\
& &+\e\sum_{i=1}^M(\lambda_i^2-\lambda_i)(G(u^M,t),e_i)(G(u^M,t),e_i)'dt\\
& &+
2\sqrt{\e}\Big(curl(u^M-\alpha \triangle u^M), curl(G(u^M,t))\Big)dW(t)
\Big)+
\e p(\frac{p}{2}-1)\|u^M\|_*^{p-4}\nonumber\\
& &\cdot\Big(curl (u^M-\alpha \triangle u^M),curl G(u^M,t)\Big)\Big(curl (u^M-\alpha \triangle u^M),curl G(u^M,t)\Big)'dt\nonumber.
\end{eqnarray}

Using the fact that
\begin{eqnarray}\label{Eq 9}
|curl(\phi)|^2\leq \frac{2}{\alpha}\|\phi\|^2_\mathbb{V}\ \ \ \text{for any }\phi\in\mathbb{W},
\end{eqnarray}
we have
\begin{eqnarray}\label{Eq 101}
& &\|u^M\|_*^{p-2}|(curl(u^M-\alpha \triangle u^M),curl\ u^M)|
+\|u^M\|_*^{p-2}|(curl(u^M-\alpha \triangle u^M),curl(F(u^M,t)))|\nonumber\\
&\leq&
C\|u^M\|_*^{p-1}\|u^M\|_\mathbb{V}\nonumber\\
&\leq&
\|u^M\|_*^{p}+C\|u^M\|^p_\mathbb{V},
\end{eqnarray}
and
\begin{eqnarray}\label{Eq 102-01}
& &\|u^M\|_*^{p-4}\Big(curl (u^M-\alpha \triangle u^M),curl\ G(u^M,t)\Big)\Big(curl (u^M-\alpha \triangle u^M),curl\ G(u^M,t)\Big)'\nonumber\\
&\leq&
C\|u^M\|_*^{p-2}\|u^M\|_\mathbb{V}^{2}\nonumber\\
&\leq&
\|u^M\|_*^{p}+C\|u^M\|^p_\mathbb{V}.
\end{eqnarray}

By the similar arguments as for the proof of (\ref{Eq ito 04}), we obtain
\begin{eqnarray*}
\sum_{i=1}^M(\lambda_i^2+\lambda_i)(G(u^M(s),s),e_i)(G(u^M(s),s),e_i)'
\leq
C\|u^M(s)\|^2_\mathbb{V}.
\end{eqnarray*}
Thus
\begin{eqnarray}\label{Eq 102-02}
& &\|u^M\|_*^{p-2}\sum_{i=1}^M(\lambda_i^2-\lambda_i)(G(u^M,t),e_i)(G(u^M,t),e_i)'\nonumber\\
&\leq&
C\|u^M\|_*^{p-2}\|u^M\|_\mathbb{V}^{2}\nonumber\\
&\leq&
\|u^M\|_*^{p}+C\|u^M\|^p_\mathbb{V}.
\end{eqnarray}

By ${\rm H\ddot{o}lder}$'s inequality and Young's inequality, for any $\eta>0$
\begin{eqnarray}\label{Eq 103}
& &\int_0^{t\wedge\tau_J}\|u^M\|_*^{p-2}|(curl(u^M-\alpha \triangle u^M),curl(G(u^M,s)\dot{h^\e}(s)))|ds\nonumber\\
&\leq&
C\int_0^{t\wedge\tau_J}\|u^M\|_*^{p-1}\|u^M\|_\mathbb{V}\|\dot{h^\e}(s))\|_{\mathbb{R}^m}ds\nonumber\\
&\leq&
\eta\sup_{s\in[0,{T\wedge\tau_J}]}\|u^M\|_*^p+C_{\eta,N,p}\int_0^{T\wedge\tau_J}\|u^M\|_\mathbb{V}^pds.
\end{eqnarray}

Applying Burkholder-Davis-Gundy inequalities,
\begin{eqnarray}\label{Eq 104}
& &E\Big(\sup_{t\in[0,T]}\Big|\int_0^{t\wedge\tau_J}\|u^M(s)\|_*^{p-2}\Big(curl(u^M(s)-\alpha \triangle u^M(s)), curl(G(u^M(s),s))\Big)dW(s)\Big|\Big)\nonumber\\
&\leq&
C E\Big(\int_0^{T\wedge\tau_J}\|u^M(s)\|_*^{2p-2}\|u^M(s)\|^2_\mathbb{V}ds\Big)^{1/2}\nonumber\\
&\leq&
CT^{1/2}E\Big(\sup_{t\in[0,{T\wedge\tau_J}]}\|u^M(s)\|_*^{p-1}\sup_{t\in[0,{T\wedge\tau_J}]}\|u^M(s)\|_\mathbb{V}\Big)\nonumber\\
&\leq&
\delta E\Big(\sup_{t\in[0,{T\wedge\tau_J}]}\|u^M(s)\|_*^{p}\Big)+C_\delta E\Big(\sup_{t\in[0,{T\wedge\tau_J}]}\|u^M(s)\|^p_\mathbb{V}\Big).
\end{eqnarray}

Combining (\ref{Eq 100})-(\ref{Eq 104}), for every $\e\in(0,1)$
\begin{eqnarray*}
& &(1-p\eta-\sqrt{\e}p\delta)E\Big(\sup_{t\in[0,{T\wedge\tau_J}]}\|u^M(t)\|^p_*\Big)\nonumber\\
&\leq&
\|u(0)\|^p_*+C_{p}E\int_0^{T\wedge\tau_J}\|u^M(s)\|^p_{*}ds+C_{\eta,N,p,\delta}E\Big(\sup_{t\in[0,T]}\|u^M(t)\|^p_\mathbb{V}\Big).\nonumber\\
\end{eqnarray*}
Let $\eta=\delta=\frac{1}{4p}$, for every $\e\in(0,1)$ to obtain
\begin{eqnarray}\label{eq 09}
E\Big(\sup_{s\in[0,{T\wedge\tau_J}]}\|u^M(s)\|^p_\mathbb{W}\Big)\leq C_{p,N}.
\end{eqnarray}
By Fatou's lemma, (\ref{eq 09}) implies (\ref{Eq G02 02}).
\end{proof}

Let $\mathbb{H}$ be a separable Hilbert space. Given $p>1$, $\beta\in(0,1)$, let $W^{\beta,p}([0,T];\mathbb{H})$ be the
 space of all $u\in L^p([0,T];\mathbb{H})$ such that
$$
\int_0^T\int_0^T\frac{\|u(t)-u(s)\|^p_\mathbb{H}}{|t-s|^{1+\beta p}}dtds<\infty
$$
endowed with the norm
$$
\|u\|^p_{W^{\beta,p}([0,T];\mathbb{H})}:=\int_0^T\|u(t)\|^p_{\mathbb{H}}dt+\int_0^T\int_0^T\frac{\|u(t)-u(s)\|^p_\mathbb{H}}{|t-s|^{1+\beta p}}dtds.
$$

The following result represents a variant of the criteria for compactness proved in \cite{Lions} (Sect. 5, Ch. I)
 and \cite{Temam 1983} (Sect. 13.3).
\begin{lem}\label{Compact}{\rm
Let $\mathbb{H}_0\subset \mathbb{H}\subset \mathbb{H}_1$ be Banach spaces, $\mathbb{H}_0$ and $\mathbb{H}_1$ reflexive, with compact embedding of $\mathbb{H}_0$ into $\mathbb{H}$.
For $p\in(1,\infty)$ and $\beta\in(0,1)$, let $\Lambda$ be the space
$$
\Lambda=L^p([0,T];\mathbb{H}_0)\cap W^{\beta,p}([0,T];\mathbb{H}_1)
$$
endowed with the natural norm. Then the embedding of $\Lambda$ into $L^p([0,T];\mathbb{H})$ is compact.
}\end{lem}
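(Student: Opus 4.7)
The strategy combines an Ehrling-type interpolation inequality with a vector-valued Kolmogorov--Riesz--Fr\'echet translation argument in the weakest space $\mathbb{H}_1$. Since $\mathbb{H}_0 \hookrightarrow \mathbb{H}$ is compact and $\mathbb{H} \hookrightarrow \mathbb{H}_1$ is continuous, Ehrling's lemma yields, for every $\eta>0$, a constant $C_\eta>0$ such that
$$\|v\|_{\mathbb{H}} \le \eta \|v\|_{\mathbb{H}_0} + C_\eta \|v\|_{\mathbb{H}_1}, \qquad v\in\mathbb{H}_0.$$
Applying this to $u_n(t) - u_m(t)$, raising to the $p$-th power and integrating in time, one obtains, for any bounded sequence $\{u_n\}\subset \Lambda$,
$$\|u_n - u_m\|_{L^p(0,T;\mathbb{H})}^p \le C\eta^p\|u_n - u_m\|_{L^p(0,T;\mathbb{H}_0)}^p + CC_\eta^p\|u_n - u_m\|_{L^p(0,T;\mathbb{H}_1)}^p.$$
The first term is uniformly small once $\eta$ is chosen small, so the task reduces to extracting a subsequence Cauchy in $L^p(0,T;\mathbb{H}_1)$.

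For that extraction I would verify the vector-valued Kolmogorov--Riesz--Fr\'echet criterion in $L^p(0,T;\mathbb{H}_1)$. Equi-integrability is immediate from the $L^p$-bound. The uniform small-time-translation estimate
$$\int_0^{T-h} \|u(t+h) - u(t)\|_{\mathbb{H}_1}^p\,dt \le C\,h^{\beta p}\,\|u\|^p_{W^{\beta,p}(0,T;\mathbb{H}_1)}$$
follows from the Gagliardo seminorm by a standard Fubini-type manipulation (the embedding of $W^{\beta,p}$ into the Besov space $B^\beta_{p,\infty}$). Fiber-wise precompactness in $\mathbb{H}_1$ I would handle by time-averaging: set $u_{n,\delta}(t) := \delta^{-1}\int_t^{t+\delta}u_n(s)\,ds$. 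H\"older's inequality gives $\|u_{n,\delta}(t)\|_{\mathbb{H}_0} \le \delta^{-1/p}\|u_n\|_{L^p(0,T;\mathbb{H}_0)}$, so for each fixed $t$ the values $\{u_{n,\delta}(t)\}_n$ lie in a bounded subset of $\mathbb{H}_0$, hence in a relatively compact subset of $\mathbb{H}_1$ through the compact chain $\mathbb{H}_0\hookrightarrow\mathbb{H}\hookrightarrow\mathbb{H}_1$. The translation estimate then provides $\|u_n - u_{n,\delta}\|_{L^p(0,T;\mathbb{H}_1)} = O(\delta^\beta)$ uniformly in $n$, and a diagonal extraction over a sequence $\delta_k\to 0$ produces the required Cauchy subsequence.

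The main obstacle is precisely the fiber-compactness step: the hypothesis boundedness in $L^p(0,T;\mathbb{H}_0)$ controls only integrals of $\|u_n(t)\|_{\mathbb{H}_0}$ and not the individual values $u_n(t)$ at a.e.\ $t$, so one cannot directly place the family in a common compact subset of $\mathbb{H}_1$ pointwise in time. The averaging device transfers the global spatial regularity into pointwise control, at the price of an approximation error which is absorbed by the fractional time regularity in $\mathbb{H}_1$. The reflexivity of $\mathbb{H}_0$ and $\mathbb{H}_1$ enters to guarantee that weak-$L^p$ limits of bounded sequences exist and that the averaging operation respects the relevant topologies. This interplay between the fractional time regularity encoded by $W^{\beta,p}(0,T;\mathbb{H}_1)$ and the compact spatial embedding $\mathbb{H}_0\hookrightarrow\mathbb{H}$ is the essence of the Simon--Lions--Temam type compactness lemma underlying the statement.
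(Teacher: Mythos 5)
The paper never proves this lemma: it is quoted as a known variant of the compactness criteria in Lions (Sect.~5, Ch.~I) and Temam (Sect.~13.3), so there is no internal argument to compare yours against. Your sketch is a correct, self-contained proof along the standard Simon-type lines: Ehrling's lemma (legitimate here since $\mathbb{H}_0\hookrightarrow\mathbb{H}$ is compact and $\mathbb{H}\hookrightarrow\mathbb{H}_1$ continuous) reduces the problem to extracting a subsequence Cauchy in $L^p([0,T];\mathbb{H}_1)$; the embedding of $W^{\beta,p}([0,T];\mathbb{H}_1)$ into the Nikolskii class $B^{\beta}_{p,\infty}$ gives the uniform translation estimate; and the time averages $u_{n,\delta}$ supply the fiber-wise compactness, with $\|u_n-u_{n,\delta}\|_{L^p(\mathbb{H}_1)}=O(\delta^{\beta})$ uniformly in $n$ (your H\"older bound $\|u_{n,\delta}(t)\|_{\mathbb{H}_0}\le\delta^{-1/p}\|u_n\|_{L^p(\mathbb{H}_0)}$ is right). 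Two points to tighten before the diagonal extraction: (i) the translation and averaging estimates are only available on $[0,T-\delta]$ (or after extending beyond $T$), and the leftover interval must be disposed of by equi-integrability; (ii) pointwise precompactness of the values $u_{n,\delta}(t)$ alone does not yet give convergence of a subsequence of $\{u_{n,\delta}\}_n$ in $L^p([0,T];\mathbb{H}_1)$ for fixed $\delta$ --- you should note that the averaged family also inherits the uniform translation estimate (or is uniformly Lipschitz into $\mathbb{H}_1$, since $\frac{d}{dt}u_{n,\delta}=\delta^{-1}(u_n(\cdot+\delta)-u_n)$), so the Kolmogorov--Riesz--Fr\'echet criterion or an Arzel\`a--Ascoli argument applies to it, and only then diagonalize over $\delta_k\to0$. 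Finally, reflexivity of $\mathbb{H}_0$ and $\mathbb{H}_1$ is not actually needed in your argument (Simon's formulation dispenses with it); it is simply a hypothesis inherited from the Lions--Temam statements, so your appeal to it for "weak limits" is decorative rather than essential.
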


\begin{prp}\label{Prop Tight}
$\{u^{h^\e}\}$ is tight in $L^2([0,T];\mathbb{V})$.
\end{prp}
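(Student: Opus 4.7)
The natural strategy is to invoke Lemma \ref{Compact} with the triple $\mathbb{H}_0=\mathbb{W}$, $\mathbb{H}=\mathbb{V}$, $\mathbb{H}_1=\mathbb{W}^*$ and $p=2$, some $\beta\in(0,1/2)$. The embedding $\mathbb{W}\hookrightarrow\mathbb{V}$ is compact (by Lemma \ref{lem Basis} and the compactness of $\mathbb{H}^3\hookrightarrow\mathbb{H}^1$ on bounded domains), so
$$
\Lambda:=L^2([0,T];\mathbb{W})\cap W^{\beta,2}([0,T];\mathbb{W}^*)
$$
embeds compactly in $L^2([0,T];\mathbb{V})$. Consequently, it suffices to produce a constant $C_N$, independent of $\varepsilon\in(0,\varepsilon_0)$, with $E\|u^{h^\varepsilon}\|_{\Lambda}^2\le C_N$; tightness then follows from Chebyshev's inequality applied to the balls of $\Lambda$.

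The bound $E\|u^{h^\varepsilon}\|_{L^2([0,T];\mathbb{W})}^2\le C_N$ is immediate from Lemma \ref{Lemma Esta1}. For the fractional-in-time estimate I would use the decomposition coming from (\ref{Eq LDP1}):
$$
u^{h^\varepsilon}(t)=u_0-\nu\!\int_0^t\widehat{A}u^{h^\varepsilon}\,ds-\int_0^t\widehat{B}(u^{h^\varepsilon},u^{h^\varepsilon})\,ds+\int_0^t\widehat{F}(u^{h^\varepsilon},s)\,ds+\int_0^t\widehat{G}(u^{h^\varepsilon},s)\dot h^\varepsilon(s)\,ds+\sqrt{\varepsilon}\!\int_0^t\widehat{G}(u^{h^\varepsilon},s)\,dW(s).
$$
For the four drift terms, combining $\|\widehat{A}u\|_{\mathbb{W}^*}\le C\|u\|_{\mathbb{V}}$, the bound (\ref{Eq B-02}) yielding $\|\widehat{B}(u,u)\|_{\mathbb{W}^*}\le C_B\|u\|_{\mathbb{V}}^2$, the Lipschitz bound (\ref{Jian F}) giving $\|\widehat{F}(u,s)\|_{\mathbb{V}}\le C_F\|u\|_{\mathbb{V}}$, and Cauchy--Schwarz for the controlled term (which produces a factor $\sqrt{N\,T}\sup_s\|u^{h^\varepsilon}(s)\|_{\mathbb{V}}$), each piece is $1/2$-Hölder in time with values in $\mathbb{W}^*$ (or in $\mathbb{V}$), uniformly on $\{\|u^{h^\varepsilon}\|_{L^\infty([0,T];\mathbb{W})}\le R\}$. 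Consequently each drift piece is bounded in $W^{\beta,2}([0,T];\mathbb{W}^*)$ for any $\beta<1/2$ by a constant times $(1+\sup_s\|u^{h^\varepsilon}(s)\|_{\mathbb{W}}^2)$, whose expectation is controlled via Lemma \ref{Lemma Esta1}.

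The main obstacle, and the only genuinely stochastic ingredient, is the estimate for the Itô integral $M^\varepsilon(t):=\sqrt{\varepsilon}\int_0^t\widehat{G}(u^{h^\varepsilon},s)\,dW(s)$ in the fractional space $W^{\beta,2}([0,T];\mathbb{V})$. Here I would invoke the standard Flandoli--G\k{a}tarek-type bound
$$
E\,\|M^\varepsilon\|_{W^{\beta,2}([0,T];\mathbb{V})}^2\le C_\beta\,\varepsilon\,E\!\int_0^T\|\widehat{G}(u^{h^\varepsilon},s)\|_{\mathbb{V}^{\otimes m}}^2\,ds,
$$
valid for $\beta\in(0,1/2)$; the right-hand side is controlled by (\ref{Jian G}) combined with $E\sup_{s\le T}\|u^{h^\varepsilon}(s)\|_{\mathbb{V}}^2\le C_N$, which is contained in Lemma \ref{Lemma Esta1}. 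Summing the drift and martingale contributions yields the required uniform bound $E\|u^{h^\varepsilon}\|_{\Lambda}^2\le C_N$, completing the tightness argument.
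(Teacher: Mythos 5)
Your proposal is correct and follows essentially the same route as the paper: the same decomposition of $u^{h^\e}$ coming from (\ref{Eq LDP1}), the same uniform bounds in $L^2([0,T];\mathbb{W})\cap W^{\beta,2}([0,T];\mathbb{W}^*)$ obtained from Lemma \ref{Lemma Esta1} together with the operator estimates for $\widehat{A}$, $\widehat{B}$, $\widehat{F}$, $\widehat{G}$, and the same conclusion via the compactness Lemma \ref{Compact} and Chebyshev's inequality on balls of $\Lambda$. The only difference is that you spell out the Flandoli--Gatarek-type fractional Sobolev estimate for the stochastic integral, a step the paper subsumes under ``similar arguments''.
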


\begin{proof}

Note that
\begin{eqnarray}\label{Eq LDP}
u^{h^\e}(t)&=&u_0-\int_0^t\nu \widehat{A}u^{h^\e}(s)ds-\int_0^t\widehat{B}(u^{h^\e}(s),u^{h^\e}(s))ds\\
&+&
\int_0^t\widehat{F}(u^{h^\e}(s),s)ds+\sqrt{\epsilon}\int_0^t\widehat{G}(u^{h^\e}(s),s)dW(s)
+
\int_0^t\widehat{G}(u^{h^\e}(s),s)\dot{h^\e}(s) ds,\nonumber\\
&=&
J_1(t)+J_2(t)+J_3(t)+J_4(t)+J_5(t)+J_6(t).\nonumber
\end{eqnarray}


Using Lemma \ref{Lemma Esta1}, it is easy to show that
\begin{eqnarray}\label{Eq Tight 01}
E(\int_0^T\|u^{h^\e}(s)\|^2_\mathbb{W}ds)\leq C_{2,N},
\end{eqnarray}
where $C_{2,N}$ is a constant independent of $\e$.
We next prove
\begin{eqnarray}\label{4 36}
\sup_{\e\in(0,\e_0)}E(\|u^{h^\e}\|^2_{W^{\beta,2}([0,T],\mathbb{W}^*)})\leq C<\infty,
\end{eqnarray}
here $\e_0$ is the constant stated in Lemma \ref{Lemma Esta1}.
Noting that, for any $u\in\mathbb{W}$ and $v\in\mathbb{V}$,
\begin{eqnarray*}
(\widehat{A}u,v)_\mathbb{V}=((u,v)),
\end{eqnarray*}
we have
\begin{eqnarray}\label{Estation-A}
\|\widehat{A}u\|_\mathbb{V}
=
\sup_{\|v\|_\mathbb{V}\leq1}|(\widehat{A}u,v)_\mathbb{V}|
=
\sup_{\|v\|_\mathbb{V}\leq1}|((u,v))|
\leq
\|u\|\sup_{\|v\|_\mathbb{V}\leq1}\|v\|
\leq
\alpha^{-1/2}\|u\|.
\end{eqnarray}
Then
\begin{eqnarray}\label{Eq J2-01}
\|J_2(t)-J_2(s)\|^2_\mathbb{V}
&=&
\|\int_s^t\nu \widehat{A}u^{h^\e}(l)dl\|^2_\mathbb{V}
\leq
\int_s^t\|\nu \widehat{A}u^{h^\e}(l)\|^2_\mathbb{V}dl(t-s)\nonumber\\
&\leq&
\nu^2\alpha^{-1}\int_s^t\|u^{h^\e}(l)\|^2dl(t-s)
\leq
\nu^2\alpha^{-2}\sup_{l\in[0,T]}\|u^{h^\e}(l)\|^2_\mathbb{V}(t-s)^2.
\end{eqnarray}

For any $\beta\in(0,1/2)$, we have
\begin{eqnarray}\label{Eq J2}
E(\|J_2\|^2_{\mathbb{W}^{\beta,2}([0,T];\mathbb{V})})
&=&
E(\int_0^T\|J_2(s)\|^2_\mathbb{V}ds
+
\int_0^T\int_0^T\frac{\|J_2(t)-J_2(s)\|^2_\mathbb{V}}{|t-s|^{1+2\beta}}dsdt)\nonumber\\
&\leq&
TE(\sup_{s\in[0,T]}\|J_2(s)\|^2_\mathbb{V})
+
E\int_0^T\int_0^T\nu^2\alpha^{-2}\sup_{s\in[0,T]}\|J_2(s)\|^2_\mathbb{V}(t-s)^{1-2\beta}dsdt\nonumber\\
&\leq&
C_{\beta,T}E(\sup_{l\in[0,T]}\|u^{h^\e}(l)\|^2_\mathbb{V}).
\end{eqnarray}

By (\ref{Eq B-02}), we have
\begin{eqnarray*}
   \|J_3(t)-J_3(s)\|^2_{\mathbb{W}^*}
&=&
   \|\int_s^t\widehat{B}(u^{h^\e}(l),u^{h^\e}(l))dl\|^2_{\mathbb{W}^*}\\
&\leq&
   \int_s^t\|\widehat{B}(u^{h^\e}(l),u^{h^\e}(l))\|^2_{\mathbb{W}^*}dl(t-s)\\
&\leq&
 C\sup_{l\in[0,T]}\|u^{h^\e}(l)\|^4_{\mathbb{V}}(t-s)^2,
\end{eqnarray*}
which yields
\begin{eqnarray}\label{Eq J3}
E(\|J_3\|^2_{\mathbb{W}^{\beta,2}([0,T];\mathbb{W}^*)})
\leq
C_{\beta,T} E(\sup_{l\in[0,T]}\|u^{h^\e}(l)\|^4_\mathbb{W}).
\end{eqnarray}

For $J_6$, we have
\begin{eqnarray*}
   \|J_6(t)-J_6(s)\|^2_{\mathbb{V}}
&=&
   \|\int_s^t\widehat{G}(u^{h^\e}(l))\dot{h^\e}(l)dl\|^2_{\mathbb{V}}\\
&\leq&
    \int_s^t\|\widehat{G}(u^{h^\e}(l))\|^2_{\mathbb{V}^{\otimes m}}dl\int_s^t\|\dot{h^\e}(l)\|^2_{\mathbb{R}^m}dl\\
&\leq&
    CN (\sup_{l\in[0,T]}\|u^{h^\e}(l)\|^2_{\mathbb{V}})(t-s),
\end{eqnarray*}
which implies that
\begin{eqnarray}\label{Eq J6}
E(\|J_6\|^2_{\mathbb{W}^{\beta,2}([0,T];\mathbb{V})})
\leq
C_{\beta,N,T} E(\sup_{l\in[0,T]}\|u^{h^\e}(l)\|^2_\mathbb{V}).
\end{eqnarray}

By similar arguments, we also have
\begin{eqnarray}\label{Eq J4,5}
E(\|J_4\|^2_{\mathbb{W}^{\beta,2}([0,T];\mathbb{V})})+E(\|J_5\|^2_{\mathbb{W}^{\beta,2}([0,T];\mathbb{V})})
\leq
C_{\beta} E(\sup_{l\in[0,T]}\|u^{h^\e}(l)\|^2_\mathbb{V}).
\end{eqnarray}

Combining (\ref{Eq Tight 01}), (\ref{Eq J2}), (\ref{Eq J6}) and (\ref{Eq J4,5}) and (\ref{Estation-02}) in Lemma \ref{Lemma Esta1},
we obtain (\ref{4 36}).

Since the imbedding $\mathbb{W}\subset\mathbb{V}$ is compact, by Lemma \ref{Compact}, $$\Lambda=L^2([0,T],\mathbb{W})\cap\mathbb{W}^{\beta,2}([0,T],\mathbb{W}^*)$$
is compactly imbedded in $L^2([0,T],\mathbb{V})$. Denote $\|\cdot\|_\Lambda:=\|\cdot\|_{L^2([0,T],\mathbb{W})}+\|\cdot\|_{\mathbb{W}^{\beta,2}([0,T],\mathbb{W}^*)}$. Thus for any $L>0$,
$$K_L=\{u\in L^2([0,T],\mathbb{V}),\ \|u\|_\Lambda\leq L\}$$
is relatively compact in $L^2([0,T],\mathbb{V})$.

We have $$P(u^{h^\e}\not\in K_L)\leq P(\|u^{h^\e}\|_\Lambda\geq L)\leq \frac{1}{L}E(\|u^{h^\e}\|_\Lambda)\leq \frac{C}{L}.$$
Choosing sufficiently large constant $L$, we see that $\{u^{h^\e},\ \e>0\}$ is tight in $L^2([0,T],\mathbb{V})$.
\end{proof}

\vspace{4mm}
Since the imbedding $\mathbb{W}^{\beta,2}([0,T],\mathbb{W}^*)\subset C([0,T];\mathbb{W}^*)$ is compact, the following result is a
consequence of (\ref{4 36}).
\begin{prp}\label{Prop tight W}
$\{u^{h^\e}\}$ is tight in $C([0,T];\mathbb{W}^*)$.
\end{prp}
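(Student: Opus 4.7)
My plan is to derive tightness in $C([0,T],\mathbb{W}^*)$ from the moment bound (\ref{4 36}) by invoking a compact fractional Sobolev embedding in time. Specifically, for any exponents $(\beta,p)$ with $\beta p > 1$, the inclusion $W^{\beta,p}([0,T],\mathbb{W}^*) \hookrightarrow C([0,T],\mathbb{W}^*)$ is continuous and compact. Once I have a uniform bound $\sup_{\e \in (0,\e_0)} E \|u^{h^\e}\|^p_{W^{\beta,p}([0,T],\mathbb{W}^*)} \le C$ for one such pair, Markov's inequality applied to balls in $W^{\beta,p}$ immediately yields tightness of $\{u^{h^\e}\}$ in $C([0,T],\mathbb{W}^*)$, exactly in the same style as the last paragraph of the proof of Proposition \ref{Prop Tight}.

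To produce such a pair, I would revisit the six-term decomposition $u^{h^\e} = J_1 + \cdots + J_6$ used in Proposition \ref{Prop Tight}. The four drift terms $J_2, J_3, J_5, J_6$ were shown there to satisfy pointwise Lipschitz bounds of the form $\|J_i(t)-J_i(s)\|_{\mathbb{W}^*} \le C |t-s|$ with constants controlled by $\sup_{t \in [0,T]} \|u^{h^\e}(t)\|_\mathbb{W}$, which Lemma \ref{Lemma Esta1} bounds uniformly in $\e$ in every $L^p(\Omega)$. Hence those terms lie in $W^{1,p}([0,T],\mathbb{W}^*)$ with uniform moments, which is comfortably above the embedding threshold. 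The only piece that sits at the borderline is the stochastic integral $J_4(t) = \sqrt{\e}\int_0^t \widehat{G}(u^{h^\e}(s),s)\,dW(s)$.

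For $J_4$ I would apply Burkholder-Davis-Gundy at an exponent $p > 2$: using Condition (G) together with the uniform $\mathbb{V}$-moments from (\ref{Eq G02 01}), this produces $E\|J_4(t)-J_4(s)\|^p_\mathbb{V} \le C_{p,N} (t-s)^{p/2}$, which by a direct computation of the $W^{\beta,p}$ double integral gives $\sup_\e E \|J_4\|^p_{W^{\beta,p}([0,T],\mathbb{V})} < \infty$ for every $\beta < 1/2$. Choosing $p$ large enough that the interval $(1/p, 1/2)$ is non-empty, and picking $\beta$ in it, ensures $\beta p > 1$, after which the compact embedding finishes the argument via Markov and the triangle inequality over the six terms.

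The main obstacle is precisely this exponent mismatch: stochastic integrals only supply Hölder-$\beta$ regularity for $\beta < 1/2$, so $p = 2$ in (\ref{4 36}) is literally at the endpoint of the Sobolev embedding into continuous functions. The standard remedy, and the one I would pursue, is to trade $p = 2$ for $p > 2$; Lemma \ref{Lemma Esta1} makes this costless because it already provides arbitrary polynomial moments of $\|u^{h^\e}\|_\mathbb{W}$. All other estimates in the proof of Proposition \ref{Prop Tight} upgrade verbatim to exponent $p$ using the same $\widehat{A}$- and $\widehat{B}$-bounds, so no new analytic input is required.
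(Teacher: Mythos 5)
Your route is essentially the paper's own: the paper disposes of this proposition in one line, citing the bound (\ref{4 36}) and asserting that the embedding $W^{\beta,2}([0,T],\mathbb{W}^*)\subset C([0,T];\mathbb{W}^*)$ is compact, and you correctly spot the flaw that with $p=2$, $\beta<1/2$ one has $\beta p<1$, repairing it by upgrading to $p>2$ via Lemma \ref{Lemma Esta1} and Burkholder--Davis--Gundy and choosing $\beta\in(1/p,1/2)$. Those estimates are sound (one small slip: $J_6$ is only H\"older-$\tfrac12$ in time, not Lipschitz, since $\dot h^\e$ is merely square integrable; this is harmless because H\"older-$\tfrac12$ still yields the $W^{\beta,p}$ bound for every $\beta<1/2$).

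The genuine gap is the compactness claim you then invoke: for $\beta p>1$ the embedding $W^{\beta,p}([0,T];\mathbb{W}^*)\hookrightarrow C([0,T];\mathbb{W}^*)$ is continuous (indeed into $C^{\beta-1/p}([0,T];\mathbb{W}^*)$) but it is \emph{not} compact when the spatial space is the same infinite-dimensional space on both sides: the constant paths $u_n(t)\equiv\phi_n$, with $(\phi_n)$ bounded in $\mathbb{W}^*$ and having no convergent subsequence there, are bounded in $W^{\beta,p}([0,T];\mathbb{W}^*)$ (their seminorm vanishes) yet have no subsequence converging in $C([0,T];\mathbb{W}^*)$; equicontinuity alone does not give Arzel\`a--Ascoli. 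So Markov over balls of $W^{\beta,p}([0,T];\mathbb{W}^*)$ does not yield tightness. Two repairs are available with estimates you already have: (i) use the Flandoli--Gatarek type result that $W^{\beta,p}([0,T];B_1)\hookrightarrow C([0,T];B_2)$ is compact when $B_1\subset B_2$ compactly and $\beta p>1$, taking $B_1=\mathbb{V}$, $B_2=\mathbb{W}^*$ (compact since $\mathbb{W}\subset\mathbb{V}$ is compact); this requires bounding $J_3$ in $\mathbb{V}$ rather than $\mathbb{W}^*$, which follows from $\|(I+\alpha A)^{-1}f\|_\mathbb{V}\le C|f|$ and $|\mathrm{curl}(u-\alpha\Delta u)\times u|\le C\|u\|^2_\mathbb{W}$ combined with the $\mathbb{W}$-moments of Lemma \ref{Lemma Esta1}; or (ii) keep your $\mathbb{W}^*$-valued H\"older bound as the equicontinuity half and add pointwise tightness from $E\bigl(\sup_{t\in[0,T]}\|u^{h^\e}(t)\|^p_\mathbb{W}\bigr)\le C$, since bounded sets of $\mathbb{W}$ are relatively compact in $\mathbb{W}^*$; Arzel\`a--Ascoli then produces the compact sets and Markov finishes. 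As written, though, the compactness step fails --- a defect your argument shares with the paper's own one-line proof, which moreover ignores the $\beta p>1$ requirement that you did address.
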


\begin{thm}\label{Thm condition 02}
For every fixed $N\in\mathbb{N}$, let $h^\e,\ h\in\mathcal{A}_N$ be such that $h^\e$ converges in distribution to $h$ as $\e\rightarrow0$.
Then
$$
\Gamma^\e\left(W(\cdot)+\frac{1}{\sqrt\e}\int_0^{\cdot}\dot h^\e(s)ds\right)\text{ converges in distribution to }\Gamma^0(\int_0^{\cdot}\dot h(s)ds)
$$
in $C([0,T];\mathbb{V})$ as $\e\rightarrow0$.
\end{thm}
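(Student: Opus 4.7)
The plan is to verify Condition (a) of Theorem \ref{thm BD} by the standard tightness plus Skorokhod representation argument. First I would note that, thanks to Lemma \ref{Lemma Esta1}, Proposition \ref{Prop Tight} and Proposition \ref{Prop tight W}, the triple $(u^{h^\e}, h^\e, W)$ is tight on the Polish space
\begin{eqnarray*}
\mathcal{Z} := \Big(L^2([0,T];\mathbb{V})\cap C([0,T];\mathbb{W}^*)\Big)\times S_N\times C([0,T];\mathbb{R}^m),
\end{eqnarray*}
since $h^\e\in \AA_N$ takes values in the compact set $S_N$ and the $W$-component has a single law. Passing to a subsequence and applying the Skorokhod representation theorem, I get a new probability space carrying $(\tilde u^\e,\tilde h^\e,\tilde W^\e)$ with the same laws as $(u^{h^\e},h^\e,W)$ that converges almost surely to some $(\tilde u,\tilde h,\tilde W)$, where $\tilde W^\e\to \tilde W$ in $C([0,T];\mathbb{R}^m)$ and $\tilde h^\e\to \tilde h$ weakly in $S_N$. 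The $\mathbb{W}$-bound of Lemma \ref{Lemma Esta1} is preserved, so Fatou gives $\tilde u\in L^\infty([0,T];\mathbb{W})$ as well.

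The next step is to identify $\tilde u$ as the unique solution $u^{\tilde h}$ of the controlled skeleton equation (\ref{Eq Skep}). For each test function $v\in\mathbb{W}$ I would pass to the limit in the scalar equation obtained from (\ref{Eq LDP1}). Using the a.s.\ convergence in $L^2([0,T];\mathbb{V})$:
the linear term $\nu\langle \widehat A \tilde u^\e,v\rangle$ passes by continuity of $\widehat A:\mathbb{V}\to\mathbb{V}$ (estimate (\ref{Estation-A}));
the drift $\widehat F$ passes by Lipschitz condition (\ref{Jian F});
the bilinear term $\widehat B(\tilde u^\e,\tilde u^\e)$ is handled by writing
\begin{eqnarray*}
\langle\widehat B(\tilde u^\e,\tilde u^\e)-\widehat B(\tilde u,\tilde u),v\rangle
= -\langle\widehat B(\tilde u^\e,v),\tilde u^\e-\tilde u\rangle-\langle\widehat B(\tilde u^\e-\tilde u,v),\tilde u\rangle,
\end{eqnarray*}
using the skew-symmetry (\ref{Eq B-04}) and the estimate (\ref{Eq B-01}) together with the $L^\infty([0,T];\mathbb{W})$ bound to control the right hand side by a constant times $\|\tilde u^\e-\tilde u\|_{L^2([0,T];\mathbb{V})}$; the stochastic term vanishes because $\sqrt{\e}\,\widehat G(\tilde u^\e,s)\to 0$ in $L^2$, which also kills the It\^o correction.

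The main obstacle is the controlled term $\int_0^t\widehat G(\tilde u^\e(s),s)\dot{\tilde h}^\e(s)\,ds$, because $\dot{\tilde h}^\e$ only converges weakly in $L^2([0,T];\mathbb{R}^m)$. I would split
\begin{eqnarray*}
\int_0^t \widehat G(\tilde u^\e,s)\dot{\tilde h}^\e ds
- \int_0^t \widehat G(\tilde u,s)\dot{\tilde h}\,ds
= \int_0^t [\widehat G(\tilde u^\e,s)-\widehat G(\tilde u,s)]\dot{\tilde h}^\e\,ds
+ \int_0^t \widehat G(\tilde u,s)(\dot{\tilde h}^\e-\dot{\tilde h})\,ds.
\end{eqnarray*}
The first piece is bounded, via Cauchy--Schwarz and (\ref{Jian G}), by $C\sqrt{N}\,\|\tilde u^\e-\tilde u\|_{L^2([0,T];\mathbb{V})}\to 0$. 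The second piece is a continuous linear functional of $\dot{\tilde h}^\e\in S_N$ (since $s\mapsto\widehat G(\tilde u(s),s)$ is in $L^2([0,T];\mathbb{V}^{\otimes m})$ by (\ref{Jian G}) and the $\mathbb{V}$-bound on $\tilde u$), hence converges to zero by weak convergence. This identifies $\tilde u=u^{\tilde h}$ a.s.

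Finally, uniqueness of the skeleton equation (proved by exactly the same Gronwall/energy estimate as in Theorem \ref{Solution Existence} and using (\ref{Eq B-04})) forces the entire original sequence to converge in distribution to the deterministic object $u^h=\Gamma^0(\int_0^\cdot\dot h(s)ds)$. To upgrade convergence from $L^2([0,T];\mathbb{V})$ to $C([0,T];\mathbb{V})$, I would apply It\^o/chain rule to $\|\tilde u^\e-u^{\tilde h}\|_\mathbb{V}^2$, exploiting the cancellation $\langle\widehat B(w,v),v\rangle=0$ from (\ref{Eq B-03}), and conclude by a pathwise Gronwall argument that $\sup_{t\in[0,T]}\|\tilde u^\e(t)-u^{\tilde h}(t)\|_\mathbb{V}\to 0$ in probability. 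Since the limit is deterministic, convergence in distribution on $C([0,T];\mathbb{V})$ follows.
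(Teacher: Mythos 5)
Your proposal follows essentially the same route as the paper: tightness of $(u^{h^\e},h^\e,W)$ in $\bigl(L^2([0,T];\mathbb{V})\cap C([0,T];\mathbb{W}^*)\bigr)\times S_N\times C([0,T];\mathbb{R}^m)$ (Propositions \ref{Prop Tight} and \ref{Prop tight W}), Skorokhod representation, identification of the a.s.\ limit as the solution of the skeleton equation (\ref{Eq Skep}) with control $\tilde h$, and then a pathwise It\^o--Gronwall estimate on $\|\tilde u^\e-u^{\tilde h}\|^2_\mathbb{V}$ using the cancellation structure of $\widehat B$ to upgrade to uniform convergence in $\mathbb{V}$. The only structural difference is that the paper delegates the identification step to the arguments of Razafimandimby--Sango \cite{RS-12}, whereas you sketch it by testing against $v\in\mathbb{W}$; your splitting of the control term $\int\widehat G(\tilde u^\e,s)\dot{\tilde h}^\e ds$ into a Lipschitz piece and a weak-convergence piece is exactly the right (and standard) way to handle the weakly convergent $\dot{\tilde h}^\e$.

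Two caveats. First, in the identification of the bilinear term your appeal to (\ref{Eq B-01}) does not deliver the claimed bound by $\|\tilde u^\e-\tilde u\|_{L^2([0,T];\mathbb{V})}$: pairing an element of $\mathbb{W}^*$ with the increment charges $\|\tilde u^\e-\tilde u\|_\mathbb{W}$, which is bounded but not small, and this affects in particular the piece $\langle\widehat B(\tilde u^\e-\tilde u,v),\tilde u\rangle$, where $curl\bigl((\tilde u^\e-\tilde u)-\alpha\triangle(\tilde u^\e-\tilde u)\bigr)$ carries third derivatives of the increment. One must instead use the identity (\ref{curl}) together with integration by parts and the embedding $\mathbb{W}\subset\mathbb{H}^3\subset W^{1,\infty}$ in $2$D (or the estimates of \cite{RS-12}) to land the smallness on the $\mathbb{V}$-norm of the increment; this is fixable but is precisely the point needing care, and it is the step the paper covers by citation. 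Second, the final sentence ``since the limit is deterministic'' is not accurate: $h\in\mathcal{A}_N$ is a random element, so $\Gamma^0(\int_0^\cdot\dot h(s)ds)$ is in general random. This does not harm the conclusion: a.s.\ (or in-probability) convergence on the Skorokhod space in $C([0,T];\mathbb{V})$, together with the equality of laws $(\tilde u^\e,\tilde h)\overset{d}{=}(u^{h^\e},h)$, already yields the asserted convergence in distribution, which is how the paper concludes.
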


\begin{proof}
Note that $u^{h^\e}=\Gamma^\e\left(W(\cdot)+\frac{1}{\sqrt\e}\int_0^{\cdot}\dot h^\e(s)ds\right)$. By Proposition \ref{Prop Tight} and Proposition \ref{Prop tight W}, we know that
$\{u^{h^\e}\}$ is tight in $L^2([0,T],\mathbb{V})\cap C([0,T],\mathbb{W}^*)$.

Let $(u,\ h,\ W)$ be any limit point of the tight family $\{(u^{h^\e},\ h^\e,\ W),\ \epsilon\in(0,\epsilon_0)\}$.
We must show that $u$ has the same law as $\Gamma^0(\int_0^{\cdot}\dot h(s)ds)$,
and actually $u^{h^\e}\Longrightarrow u$ in the smaller space $C([0,T];\mathbb{V})$.

Set
$$
\Pi=\Big(L^2([0,T],\ \mathbb{V})\cap C([0,T],\ \mathbb{W}^*),\ S_N,\ C([0,T],\ \mathbb{R}^m)\Big).
$$
By the Skorokhod representation theorem, there exit a stochastic basis
$(\Omega^1,\mathcal{F}^1,\{\mathcal{F}_t^1\}_{t\in[0,T]},\mathbb{P}^1)$ and, on this basis,
$\Pi$-valued random variables $(\widetilde{X}^\epsilon,\ \widetilde{h}^\e,\ \widetilde{W}^\epsilon)$ $(\widetilde{X},\ \widetilde{h},\ \widetilde{W})$
such that $(\widetilde{X}^\epsilon,\ \widetilde{h}^\e,\ \widetilde{W}^\epsilon)$ (respectively $(\widetilde{X},\ \widetilde{h},\ \widetilde{W})$) has the same law as $\{(u^{h^\e},\ h^\e,\ W),\ \epsilon\in(0,\epsilon_0)\}$ (respectively $(u,\ h,\ W)$), and
$(\widetilde{X}^\epsilon,\ \widetilde{h}^\e,\ \widetilde{W}^\epsilon)\rightarrow (\widetilde{X},\ \widetilde{h},\ \widetilde{W})$-$\mathbb{P}^1$ a.s. in $\Pi$.

From the equation satisfied by $(u^{h^\e},\ h^\e,\ W)$, we see that $(\widetilde{X}^\epsilon,\ \widetilde{h}^\e,\ \widetilde{W}^\epsilon)$ satisfies the following integral equation
\begin{eqnarray}\label{Eq X01}
\widetilde{X}^\epsilon(t)&=&u_0-\int_0^t\nu \widehat{A}\widetilde{X}^\epsilon(s)ds-\int_0^t\widehat{B}(\widetilde{X}^\epsilon(s),\widetilde{X}^\epsilon(s))ds\\
& &+
\int_0^t\widehat{F}(\widetilde{X}^\epsilon(s),s)ds+\sqrt{\epsilon}\int_0^t\widehat{G}(\widetilde{X}^\epsilon(s),s)d\widetilde{W}^\epsilon(s)
+
\int_0^t\widehat{G}(\widetilde{X}^\epsilon(s),s)\dot{\widetilde{h}^\e}(s) ds,\nonumber
\end{eqnarray}
and (see (\ref{Estation-02}))
\begin{eqnarray}\label{Estation-X01}
\sup_{\epsilon\in(0,\epsilon_0)}E(\sup_{s\in[0,T]}\|\widetilde{X}^\epsilon(s)\|^p_\mathbb{W})\leq C_p,\ \text{for any }2\leq p<\infty.
\end{eqnarray}

Using similar arguments as in the proof of Theorem 3.4 and Theorem 4.1 in \cite{RS-12}, we can show that $\widetilde{X}$ is the
unique solution of the following equation
\begin{eqnarray}\label{Eq X02}
\widetilde{X}(t)&=&u_0-\int_0^t\nu \widehat{A}\widetilde{X} (s)ds-\int_0^t\widehat{B}(\widetilde{X} (s),\widetilde{X} (s))ds\\
& &+
\int_0^t\widehat{F}(\widetilde{X} (s),s)ds
+
\int_0^t\widehat{G}(\widetilde{X} (s),s)\dot{\widetilde{h}} (s) ds.\nonumber
\end{eqnarray}

Finally, we will prove that
\begin{eqnarray}\label{Eq F01}
\lim_{\epsilon\rightarrow0}\sup_{t\in[0,T]}\|\widetilde{X}(t)-\widetilde{X}^\e(t)\|_\mathbb{V}=0,\ \ \ \mathbb{P}^1-a.s..
\end{eqnarray}

Let $v^\e(t)=\widetilde{X}^\e(t)-\widetilde{X}(t)$. Using $It\hat{o}$ formula, we have
\begin{eqnarray*}
&  &\|v^\e(t)\|^2_{\mathbb{V}}+2\int_0^t\Big(\nu \|v^\e(s)\|^2+\langle\widehat{B}(\widetilde{X}^\e(s),\widetilde{X}^\e(s))-\widehat{B}(\widetilde{X}(s),\widetilde{X}(s)),v^\e(s)\rangle\Big)ds\nonumber\\
&=&
   \int_0^t
       2\Big(
           (\widehat{F}(\widetilde{X}^\e(s),s)-\widehat{F}(\widetilde{X}(s),s),v^\e(s))_\mathbb{V}
            +
            \e\|\widehat{G}(\widetilde{X}^\epsilon(s),s)\|^2_{\mathbb{V}^{\otimes m}}
           \Big)ds\nonumber\\
  &  & +
        2\int_0^t\sqrt{\e}(\widehat{G}(\widetilde{X}^\e(s),s),v^\e(s))_\mathbb{V}d\widetilde{W}^\e(s)\nonumber\\
  &  &+
       2\int_0^t(\widehat{G}(\widetilde{X}^\e(s),s)\dot{\widetilde{h}^\e}(s)-\widehat{G}(\widetilde{X}(s))\dot{\widetilde{h}}(s),v^\e(s))_\mathbb{V}ds.
\end{eqnarray*}
Since
$$
\langle\widehat{B}(\widetilde{X}^\e(s),\widetilde{X}^\e(s))-\widehat{B}(\widetilde{X}(s),\widetilde{X}(s)),v^\e(s)\rangle
=
-\langle\widehat{B}(v^\e(s),v^\e(s)),\widetilde{X}(s)\rangle,
$$
by (\ref{Eq B-02}) and (\ref{F-02}), it follows that
\begin{eqnarray*}
&  &\|v^\e(t)\|^2_{\mathbb{V}}+2\nu\int_0^t\|v^\e(s)\|^2ds\nonumber\\
&\leq&
   C\int_0^t\|v^\e(s)\|^2_\mathbb{V}\|\widetilde{X}(s)\|_\mathbb{W}ds
   +
   C\int_0^t\|v^\e(s)\|^2_\mathbb{V}ds
   +
   C\e\int_0^t\|\widetilde{X}^\epsilon(s)\|^2_\mathbb{V}ds
           \nonumber\\
  &  & +
        2\int_0^t\sqrt{\e}(\widehat{G}(\widetilde{X}^\e(s),s),v^\e(s))_\mathbb{V}d\widetilde{W}^\e(s)\nonumber\\
  &  &+
       2\int_0^t(\widehat{G}(\widetilde{X}^\e(s),s)\dot{\widetilde{h}^\e}(s)-\widehat{G}(\widetilde{X}(s))\dot{\widetilde{h}}(s),v^\e(s))_\mathbb{V}ds.
\end{eqnarray*}
Since
\begin{eqnarray*}
&  &\int_0^t|(\widehat{G}(\widetilde{X}^\e(s),s)\dot{\widetilde{h}^\e}(s)-\widehat{G}(\widetilde{X}(s),s)\dot{\widetilde{h}}(s),v^\e(s))_\mathbb{V}|ds\\
&\leq&
   \int_0^t|(\widehat{G}(\widetilde{X}^\e(s),s)\dot{\widetilde{h}^\e}(s)-\widehat{G}(\widetilde{X}(s),s)\dot{\widetilde{h}^\e}(s),v^\e(s))_\mathbb{V}|ds\\
  & &+
   \int_0^t|(\widehat{G}(\widetilde{X}(s),s)\dot{\widetilde{h}^\e}(s)-\widehat{G}(\widetilde{X}(s),s)\dot{\widetilde{h}}(s),v^\e(s))_\mathbb{V}|ds\\
&\leq&
    C\int_0^t\|v^\e(s)\|^2_\mathbb{V}\|\dot{\widetilde{h}^\e}(s)\|_{\mathbb{R}^m}ds
  +
   C\int_0^t\|\widetilde{X}(s)\|_{\mathbb{V}}\|\dot{\widetilde{h}^\e}(s)-\dot{\widetilde{h}}(s)\|_{\mathbb{R}^m}\|v^\e(s)\|_\mathbb{V}ds\\
 &\leq&
  C\int_0^t\|v^\e(s)\|^2_\mathbb{V}\|\dot{\widetilde{h}^\e}(s)\|_{\mathbb{R}^m}ds
  +
   CN^{1/2}\sup_{s\in[0,T]}\|\widetilde{X}(s)\|_{\mathbb{V}}(\int_0^T\|v^\e(s)\|^2_\mathbb{V}ds)^{1/2},
\end{eqnarray*}
we have
\begin{eqnarray*}
&  &\|v^\e(t)\|^2_{\mathbb{V}}+2\nu\int_0^t\|v^\e(s)\|^2ds\nonumber\\
&\leq&
   C\int_0^t\|v^\e(s)\|^2_\mathbb{V}\|\widetilde{X}(s)\|_\mathbb{W}ds
   +
   C\int_0^t\|v^\e(s)\|^2_\mathbb{V}ds
   +
   C\e\int_0^t\|\widetilde{X}^\epsilon(s)\|^2_\mathbb{V}ds
           \nonumber\\
  &  & +
        2\int_0^t\sqrt{\e}(\widehat{G}(\widetilde{X}^\e(s),s),v^\e(s))_\mathbb{V}d\widetilde{W}^\e(s)\nonumber\\
  &  &+
       C\int_0^t\|v^\e(s)\|^2_\mathbb{V}\|\dot{\widetilde{h}^\e}(s)\|_{\mathbb{R}^m}ds
  +
   CN^{1/2}\sup_{s\in[0,T]}\|\widetilde{X}(s)\|_{\mathbb{V}}(\int_0^T\|v^\e(s)\|^2_\mathbb{V}ds)^{1/2}\\
&\leq&
   C_N\int_0^t\|v^\e(s)\|^2_\mathbb{V}\varphi^\e(s)ds+\Theta(\e,T),
\end{eqnarray*}
here
\begin{eqnarray}\label{Eq varphi}
\varphi^\e(s,\omega)=\|\widetilde{X}(s)\|_\mathbb{W}+1+\|\dot{\widetilde{h}^\e}(s)\|_{\mathbb{R}^m},
\end{eqnarray}
and
\begin{eqnarray}
\Theta(\e,T,\omega)
&=&
C\e\int_0^T\|\widetilde{X}^\epsilon(s)\|^2_\mathbb{V}ds
+
2\sup_{t\in[0,T]}|\int_0^t\sqrt{\e}(\widehat{G}(\widetilde{X}^\e(s),s),v^\e(s))_\mathbb{V}d\widetilde{W}^\e(s)|\nonumber\\
&&+
   CN^{1/2}\sup_{s\in[0,T]}\|\widetilde{X}(s)\|_{\mathbb{V}}(\int_0^T\|v^\e(s)\|^2_\mathbb{V}ds)^{1/2}.
\end{eqnarray}
By Gronwall' inequality,
\begin{eqnarray}\label{Eq gronwall}
    \sup_{t\in[0,T]}\|v^\e(t)\|^2_{\mathbb{V}}
&\leq&
    \Theta(\e,T)
    \exp\Big(\int_0^T\varphi^\e(s)ds\Big).
\end{eqnarray}

Noting that $\lim_{\e\rightarrow0}\widetilde{X}^\e=\widetilde{X}$ in $L^2([0,T],\mathbb{V})\ \mathbb{P}^1$-a.s.
and $\sup_{s\in[0,T]}\|\widetilde{X}(s,\omega)\|_\mathbb{W}\leq C(\omega)<\infty\ \mathbb{P}^1$-a.s.,
we have
\begin{eqnarray}\label{Eq es 01}
 \exp\Big(\int_0^T\varphi^\e(s,\omega)ds\Big)\leq C(\omega)<\infty\ \mathbb{P}^1-a.s.,
\end{eqnarray}
and
\begin{eqnarray}\label{Eq es 02}
 \lim_{\e\rightarrow0}\Theta(\e,T)
 =0,
      \ \ \ \mathbb{P}^1-a.s..
\end{eqnarray}
Hence
 \begin{eqnarray*}
  \sup_{t\in[0,T]}\|v^\e(t)\|^2_{\mathbb{V}}\rightarrow 0,\ \ \ \mathbb{P}^1\text{-}a.s..
 \end{eqnarray*}

\end{proof}
%

Replacing $\sqrt{\epsilon}\int_0^t\widehat{G}(u^{h^\e}(s),s)dW(s)$ with 0 in the proof of Proposition \ref{Prop Tight}, Proposition \ref{Prop tight W}
and Theorem \ref{Thm condition 02}, we have

\begin{thm}\label{Thm condition 01}
$\Gamma^0(\int_0^{\cdot}\dot g(s)ds)$ is a continuous mapping from $g\in S_N$ into $C([0,T];\mathbb{V})$, in particular,
$\{\Gamma^0(\int_0^{\cdot}\dot g(s)ds);\ g\in S_N\}$ is a compact subset of $C([0,T],\mathbb{V})$.
\end{thm}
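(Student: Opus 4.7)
The plan is to exploit compactness of $S_N$ in the weak topology: once I establish that $g \mapsto u^g$ is sequentially continuous from $S_N$ into $C([0,T];\mathbb{V})$, the set $\{\Gamma^0(\int_0^\cdot \dot g(s) ds); g \in S_N\}$ is the continuous image of a compact set and hence compact. So the task reduces to showing that $g_n \to g$ weakly in $S_N$ implies $u^{g_n} \to u^g$ in $C([0,T];\mathbb{V})$.

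First I would establish the deterministic analogue of Lemma \ref{Lemma Esta1}: for every $g \in S_N$,
\[
\sup_{t\in[0,T]}\|u^g(t)\|_{\mathbb{W}}^p + \int_0^T \|u^g(s)\|^2 ds \le C_{p,N}.
\]
This is obtained by repeating the energy arguments on the Galerkin approximations in Lemma \ref{Lem G 02}, simply deleting the It\^o stochastic integral and the quadratic variation terms; the control term is handled exactly as in (\ref{Eq G 04}), where the bound $\int_0^T \|\dot g\|_{\mathbb{R}^m}^2 ds \le N$ is used to close the estimate via Gronwall. Similarly, mimicking Proposition \ref{Prop Tight} without the stochastic integral, I would derive the uniform bound $\sup_{g \in S_N} \|u^g\|_{W^{\beta,2}([0,T],\mathbb{W}^*)} \le C_{\beta,N}$ for some $\beta \in (0,1/2)$.

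Next, given $g_n \to g$ weakly in $S_N$, the uniform bounds together with Lemma \ref{Compact} imply that $\{u^{g_n}\}$ is relatively compact in $L^2([0,T];\mathbb{V}) \cap C([0,T];\mathbb{W}^*)$. Extract a subsequence converging to some $\tilde u$ in this topology and pass to the limit in (\ref{Eq Skep}). The nonlinear term $\widehat B(u^{g_n},u^{g_n})$ is handled by (\ref{Eq B-02}) together with strong $L^2(\mathbb{V})$-convergence; the drift $\widehat F$ uses (\ref{Jian F}). The delicate term is $\int_0^t \widehat G(u^{g_n}(s),s)\dot{g_n}(s)\, ds$, which I would split as
\[
\int_0^t \bigl(\widehat G(u^{g_n}(s),s) - \widehat G(u^g(s),s)\bigr)\dot{g_n}(s)\,ds + \int_0^t \widehat G(u^g(s),s)\dot{g_n}(s)\,ds.
\]
The first piece tends to zero by the Lipschitz bound (\ref{Jian G}), Cauchy--Schwarz, and strong $L^2(\mathbb{V})$ convergence combined with the $N^{1/2}$ bound on $\|\dot{g_n}\|_{L^2}$; the second piece converges by the weak convergence of $\dot{g_n}$ in $L^2([0,T];\mathbb{R}^m)$ and strong $L^2$-regularity of $\widehat G(u^g,\cdot)$. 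Uniqueness of the skeleton equation forces $\tilde u = u^g$, so the whole sequence converges.

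Finally, to upgrade convergence to $C([0,T];\mathbb{V})$, I would run the It\^o-free version of the Gronwall argument at the end of the proof of Theorem \ref{Thm condition 02}. Setting $v_n = u^{g_n} - u^g$ and using (\ref{Eq B-04}) together with (\ref{Eq B-02}), an energy identity in $\|\cdot\|_\mathbb{V}^2$ yields
\[
\|v_n(t)\|_{\mathbb{V}}^2 \le C_N \int_0^t \|v_n(s)\|_{\mathbb{V}}^2 \varphi_n(s)\,ds + \Theta_n,
\]
with $\varphi_n(s) = \|u^g(s)\|_{\mathbb{W}} + 1 + \|\dot{g_n}(s)\|_{\mathbb{R}^m}$ satisfying $\int_0^T \varphi_n(s)\,ds \le C_N$ uniformly in $n$, and $\Theta_n \to 0$ by the strong-weak pairing described above. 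Gronwall's inequality then gives $\sup_{t\in[0,T]}\|v_n(t)\|_{\mathbb{V}}^2 \to 0$.

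The main obstacle is the strong-weak limit in the control term $\widehat G\,\dot g_n$: one must carefully decompose this integral so that strong convergence of the nonlinear coefficient $\widehat G(u^{g_n},\cdot)$ (which requires the a priori $L^2(\mathbb{V})$-compactness already established) compensates for the merely weak convergence of $\dot g_n$, and the same splitting is the mechanism that drives $\Theta_n \to 0$ in the Gronwall step.
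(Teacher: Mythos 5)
Your proposal is correct and follows essentially the same route as the paper, which proves this theorem simply by deleting the stochastic integral in the proofs of Proposition \ref{Prop Tight}, Proposition \ref{Prop tight W} and Theorem \ref{Thm condition 02}: deterministic a priori and $W^{\beta,2}([0,T],\mathbb{W}^*)$ bounds, compactness in $L^2([0,T];\mathbb{V})\cap C([0,T];\mathbb{W}^*)$, identification of the limit with the unique solution of the skeleton equation (including the same strong--weak splitting of the $\widehat G\,\dot g_n$ term), and the final Gronwall argument to upgrade to convergence in $C([0,T];\mathbb{V})$, after which compactness of the image follows from weak compactness of $S_N$. You have merely written out in detail what the paper leaves implicit.
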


\def\refname{ References}

\end{document}